\documentclass[12pt, a4paper, parskip=half, abstracton]{scrartcl}

\usepackage{array}
\usepackage{marginnote}
\usepackage{xcolor}
\usepackage{amscd,amssymb,amsfonts,amsmath,latexsym,amsthm}
\usepackage{hyperref}
\usepackage[all,cmtip]{xy}
\textheight23cm
\textwidth16cm
\usepackage{mathrsfs}
\oddsidemargin0.0cm
\topmargin-1.0cm
\footskip2.0cm
\setlength\parindent{0pt}
\usepackage{graphicx}
\usepackage{bm} 


\usepackage{etoolbox}

\def\hB{\hspace*{\fill}$\qed$}

\usepackage{chngcntr} 
\usepackage[nottoc]{tocbibind} 
\setcounter{tocdepth}{2} 

\usepackage{defs_pp1}
\usepackage{slashed}
\usepackage[utf8]{inputenc}
\usepackage{microtype}
\usepackage[english]{babel}

\title{A universal coarse $K$-theory}
\author{
Ulrich Bunke\thanks{Fakult{\"a}t f{\"u}r Mathematik,
Universit{\"a}t Regensburg,
93040 Regensburg,
GERMANY\newline
ulrich.bunke@mathematik.uni-regensburg.de} 
\and
Denis-Charles Cisinski\thanks{Fakult{\"a}t f{\"u}r Mathematik,
Universit{\"a}t Regensburg,
93040 Regensburg,
GERMANY\newline
denis-charles.cisinski@mathematik.uni-regensburg.de}
}

\numberwithin{equation}{section}
\setcounter{secnumdepth}{3}
\counterwithout{footnote}{section}

\newtheorem{theorem}{Theorem}[section] 
\newtheorem{prop}[theorem]{Proposition}
\newtheorem{lem}[theorem]{Lemma}
\newtheorem{ddd}[theorem]{Definition}
\newtheorem{kor}[theorem]{Corollary}

\theoremstyle{remark}
\theoremstyle{definition}

\newtheorem{ex}[theorem]{Example}
\newtheorem{rem}[theorem]{Remark}

\newcommand{\Ho}{\mathbf{Ho}}

\newcommand{\Coind}{\mathrm{Coind}}

\newcommand{\Res}{\mathrm{Res}}

\newcommand{\Orb}{\mathbf{Orb}}

\newcommand{\BC}{\mathbf{BornCoarse}}

\newcommand{\Cofib}{\mathrm{Cofib}}
\newcommand{\bB}{{\mathbf{B}}}

\newcommand{\cL}{{\mathcal{L}}}

\newcommand{\Add}{{\mathtt{Add}}}

\newcommand{\bA}{{\mathbf{A}}}

\newcommand{\cU}{{\mathcal{U}}}
\newcommand{\cY}{{\mathcal{Y}}}

 \newcommand{\Cone}{{\mathtt{Cone}}}

 \newcommand{\Cat}{{\mathbf{Cat}}}

\newcommand{\Spc}{\mathbf{Spc}}

\newcommand{\UK}{\mathrm{UK}}

\renewcommand{\Add}{\mathbf{Add}}
\newcommand{\Idem}{\mathrm{Idem}}
\newcommand{\RelCat}{\mathbf{RelCat}}

\begin{document}
\maketitle
\begin{abstract}
In this paper, we construct an equivariant coarse homology theory with values in the
category of non-commutative motives of Blumberg, Gepner and Tabuada, with coefficients in
any small additive category. Equivariant coarse $K$-theory is obtained from the latter
by passing to global sections. The present construction  extends   joint work
of the first named author
with Engel, Kasprowski and Winges 
by  promoting
  codomain of the equivariant coarse $K$-homology functor to non-commutative motives.
\end{abstract}
\tableofcontents

\section{Introduction}

In this paper we introduce a universal $K$-theory-like equivariant coarse homology theory
associated to an additive category.  

Equivariant coarse homology theories have been introduced in \cite{bekw} as the basic objects of equivariant coarse homotopy theory.   Equivariant coarse homology theories give rise to Bredon-type equivariant homology theories by a construction which we will explain in the Remark \ref{fiowfweewfewfewwfew} below. Such a presentation of equivariant homology theories  plays an important role in the proofs of isomorphism conjectures, see e.g. \cite{calped}, \cite{MR2030590},\cite{blr}\footnote{These are only some papers of a vast list  we will not try to review here.}.

As an example of an equivariant coarse homology theory, in \cite{bekw} we constructed an equivariant coarse $K$-homology functor $\mathrm{K} \cX_{\bA}^{G}$ associated to an additive category $\bA$. 
It is defined as a composition of functors \begin{equation}\label{ferflkijlerferferf}
\mathrm{K}  \cX_{\bA}^{G}:G\BC\stackrel{\bV^{G}_{\bA}}{\to} \Add\stackrel{K}{\to} \Sp\ .
\end{equation}
In this formula
 $G\BC$ denotes the category of $G$-bornological coarse spaces, and $\bV_{\bA}^{G}$ is a functor which associates to a $G$-bornological coarse space $X$ the additive category of equivariant $X$-controlled $\bA$-objects $\bV_{\bA}^{G}(X)$.
The functor $K$ is a non-connective $K$-theory functor from additive categories to spectra \cite{MR802790}.

In \cite{MR3070515} Blumberg, Gepner, and Tabuada   interpret  algebraic $K$-theory  as a localizing invariant of small stable $\infty$-categories.  They construct a universal localizing invariant \begin{equation}\label{vcoihwvoihvvwefwefwefewf}
\cU_{{loc}}:\Cat_{\infty}^{ex}\to \cM_{loc}\ ,
\end{equation}
where $\Cat^{ex}_{\infty}$ denotes the $\infty$-category of small stable $\infty$-categories.

The goal of the present paper is to construct a  factorization of  $\mathrm{K}\cX^{G}_{\bA}$ over $\cU_{{loc}}$.  To  find such a factorization is very much in spirit of  the ideas of Balmer-Tabuada \cite{Balmer:2008aa} and might be helpful in an approach to the ``Mother Isomorphism Conjecture''. It also leeds to further new examples of equivariant coarse homology theories, see Example \ref{fweoifewfewfewf}.

In Section \ref{ewfoiwefewfewfw}   we construct the  functor $\Ch^{b}(-)_{\infty}:\Add\to \Cat^{ex}_{\infty}$ which sends a small additive category $\bA$ to the small  stable $\infty$-category ${\Ch^{b}(\bA)_{\infty}}$ obtained from the bounded chain complexes over $\bA$  by inverting homotopy equivalences.
We then analyse the composition
$$\UK :  \Add\stackrel{\Ch^{b}(-)_{\infty}}{\to} \Cat^{ex}_{\infty}\stackrel{\cU_{{loc}}}{\to} \cM_{loc} \ .$$
In Section \ref{foifioufouewfwefwefw}  we first review bornological coarse spaces and define the category $\BC$. We furthermore recall the notion of an equivariant coarse homology theory,  and the definition of the categories of $X$-controlled $\bA$-objects $\bV^{G}_{\bA}(X)$. Then we continue and 
 define, in analogy to \eqref{ferflkijlerferferf}, the functor
$$\UK\cX_{\bA}^{G}: G\BC\stackrel{\bV^{G}_{\bA}}{\to} \Add \stackrel{ \UK}{\to}  \cM_{loc}\ .$$
From this functor 
 we can recover the usual coarse algebraic $K$-homology functor \eqref{ferflkijlerferferf} by  
\begin{equation}\label{fu98fu24ff23f}
\mathrm{K}\cX^{G}_{\bA}(-)\simeq \map_{\cM_{loc}}(\cU_{{loc}}(\bS^{\omega}_{\infty}),\UK\cX_{\bA}^{G}(-))\ ,
\end{equation} 
where $\bS^{\omega}_{\infty}$ denotes the small stable $\infty$-categories of compact spectra.

Our main theorem is:
\begin{theorem}[Theorem \ref{wiwofewfewfewfewfw}]\label{fjoifjoweffewfewfef} 
The functor $\UK\cX_{\bA}^{G}$ is an equivariant  coarse homology theory.
\end{theorem}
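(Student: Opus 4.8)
The plan is to check directly the four defining properties of an equivariant coarse homology theory from \cite{bekw} — coarse invariance, excision (so in particular $\UK\cX_{\bA}^{G}(\emptyset)\simeq 0$ and cocartesianness on coarsely excisive decompositions), vanishing on flasque $G$-bornological coarse spaces, and $u$-continuity — the target $\cM_{loc}$ being presentable and stable, hence a legitimate codomain. Writing $\UK\cX_{\bA}^{G}=\UK\circ\bV^{G}_{\bA}$ with $\UK=\cU_{{loc}}\circ\Ch^{b}(-)_{\infty}$, the idea is to combine the structural properties of $\bV^{G}_{\bA}\colon G\BC\to\Add$ recalled in Section \ref{foifioufouewfwefwefw} — exactly those used in \cite{bekw} to prove that $\mathrm{K}\cX_{\bA}^{G}$ is a coarse homology theory — with the formal properties of $\UK\colon\Add\to\cM_{loc}$ established in Section \ref{ewfoiwefewfewfw}, namely that $\UK$ is a finitary localizing invariant of additive categories (it preserves filtered colimits and sends exact sequences of small additive categories to cofiber sequences in $\cM_{loc}$) and that it annihilates flasque additive categories. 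Since non-connective $K$-theory shares precisely these formal properties, the arguments of \cite{bekw} transpose essentially verbatim.

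\emph{Coarse invariance and flasques.} By \cite{bekw}, $\bV^{G}_{\bA}$ sends the cylinder projection $\{0,1\}_{\max,\min}\otimes X\to X$ to a morphism of additive categories that becomes invertible under any additive invariant, the obstruction being absorbed by an Eilenberg swindle; and for flasque $X$, with flasqueness implemented by $f\colon X\to X$, the endofunctor $\bigoplus_{n\ge 0}f^{n}_{*}$ of $\bV^{G}_{\bA}(X)$ (well defined by the local finiteness condition on controlled objects) exhibits $\bV^{G}_{\bA}(X)$ as a flasque additive category. Applying $\Ch^{b}(-)_{\infty}$, which preserves finite direct sums and hence transports swindles, and then $\cU_{{loc}}$, and invoking the standard fact that a localizing (in particular additive) invariant vanishes on any small stable $\infty$-category admitting an Eilenberg swindle, we obtain coarse invariance of $\UK\cX_{\bA}^{G}$ and $\UK\cX_{\bA}^{G}(X)\simeq 0$ for flasque $X$.

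\emph{$u$-continuity and excision.} For $u$-continuity we combine the identification $\bV^{G}_{\bA}(X)\simeq\operatorname{colim}_{U}\bV^{G}_{\bA}(X_{U})$ from \cite{bekw} (a filtered colimit of additive categories along fully faithful functors, over the entourages $U$ of $X$) with the fact that $\Ch^{b}(-)_{\infty}$ preserves filtered colimits and that $\cU_{{loc}}$ is finitary, getting $\UK\cX_{\bA}^{G}(X)\simeq\operatorname{colim}_{U}\UK\cX_{\bA}^{G}(X_{U})$. For excision, recall from \cite{bekw} that for a coarsely excisive decomposition $(A,B)$ of $X$ the full additive subcategories $\bV^{G}_{\bA}(A),\bV^{G}_{\bA}(B)\subseteq\bV^{G}_{\bA}(X)$ sit in two exact sequences $\bV^{G}_{\bA}(A\cap B)\to\bV^{G}_{\bA}(A)\to\bV^{G}_{\bA}(A)/\bV^{G}_{\bA}(A\cap B)$ and $\bV^{G}_{\bA}(B)\to\bV^{G}_{\bA}(X)\to\bV^{G}_{\bA}(X)/\bV^{G}_{\bA}(B)$ (Karoubi filtrations), together with an equivalence $\bV^{G}_{\bA}(A)/\bV^{G}_{\bA}(A\cap B)\simeq\bV^{G}_{\bA}(X)/\bV^{G}_{\bA}(B)$ produced by coarse excisiveness. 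Applying the localizing invariant $\UK$ turns these into cofiber sequences in $\cM_{loc}$, and a diagram chase identifies the cofibres of $\UK\cX_{\bA}^{G}(A\cap B)\to\UK\cX_{\bA}^{G}(A)$ and of $\UK\cX_{\bA}^{G}(B)\to\UK\cX_{\bA}^{G}(X)$, i.e.\ shows that the coarsely excisive square is cocartesian; the special case $\bV^{G}_{\bA}(\emptyset)=0$ yields $\UK\cX_{\bA}^{G}(\emptyset)\simeq 0$.

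\emph{The main obstacle.} The delicate input is the one used in the excision step: that $\UK$ sends exact sequences of small additive categories to cofiber sequences. Since $\cU_{{loc}}$ is localizing by construction, this reduces to showing that $\Ch^{b}(-)_{\infty}$ takes a Karoubi filtration $\bA_{0}\to\bA_{1}\to\bA_{1}/\bA_{0}$ to an exact sequence of small stable $\infty$-categories — equivalently, that the canonical comparison functor $\Ch^{b}(\bA_{1})_{\infty}/\Ch^{b}(\bA_{0})_{\infty}\to\Ch^{b}(\bA_{1}/\bA_{0})_{\infty}$ to the Verdier quotient is an equivalence after idempotent completion. This is the point that the work of Section \ref{ewfoiwefewfewfw} must supply: full faithfulness of this functor is a cofinality/mapping-spectrum computation in the Verdier quotient, whereas essential surjectivity calls for a Gillet--Waldhausen-type rectification, since a bounded complex over $\bA_{1}/\bA_{0}$ only satisfies $d^{2}=0$ after projection and must be corrected to an honest complex over $\bA_{1}$. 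Granting the results of that section, the four axioms assemble as indicated and the theorem follows.
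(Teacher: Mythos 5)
Your overall strategy is the paper's: verify the four axioms of Definition \ref{foifjiofwefewwf} by combining the structure of $\bV^{G}_{\bA}$ with the properties of $\UK$ from Section \ref{ewfoiwefewfewfw} (equivalence/Morita invariance, Karoubi filtration $\mapsto$ fibre sequence, vanishing on flasques, preservation of filtered colimits), and your identification of the Karoubi-filtration statement for $\Ch^{b}(-)_{\infty}$ as the delicate input is exactly where the paper places the weight (Lemma \ref{riowgrgergggergerg} and Proposition \ref{roijoregergegr}), which you are entitled to grant.

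There is, however, a genuine mismatch in the excision step: you verify cocartesianness of squares attached to \emph{coarsely excisive decompositions} $(A,B)$, but that is not the excision axiom of this framework. Definition \ref{foifjiofwefewwf} requires that for a complementary pair $(Z,\cY)$, with $\cY=(Y_{i})_{i\in I}$ a \emph{big family}, the map $E(Z,Z\cap\cY)\to E(X,\cY)$ is an equivalence, where $E(\cY):=\colim_{i\in I}E(Y_{i})$. Verifying this needs three things your sketch does not supply: (i) the identification $\UK\cX^{G}_{\bA}(\cY)\simeq\UK\big(\colim_{i\in I}\bV^{G}_{\bA}(Y_{i})\big)$, which uses that $\UK$ is finitary (you do invoke finitariness, but only for $u$-continuity); (ii) the fact that $\bV^{G}_{\bA}(\cY):=\colim_{i\in I}\bV^{G}_{\bA}(Y_{i})$ is a Karoubi filtration of $\bV^{G}_{\bA}(X)$ (this is where the big-family condition $U[Y_{i}]\subseteq Y_{j}$ enters, and it has no analogue for an arbitrary pair of subsets); and (iii) an explicit equivalence $\bV^{G}_{\bA}(Z)/\bV^{G}_{\bA}(Z\cap\cY)\to\bV^{G}_{\bA}(X)/\bV^{G}_{\bA}(\cY)$ induced by $Z\hookrightarrow X$, whose inverse is constructed by restricting an object $(M,\phi,\rho)$ to $(M(Z),\phi(Z\cap-),\rho|_{M(Z)})$; "produced by coarse excisiveness" is not a proof here, since for a complementary pair one must check that the error terms are completely continuous, i.e. factor through the subcategory attached to $\cY$. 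Since you already have the Karoubi-filtration and finitariness inputs on hand, the repair is routine, but as written the axiom actually demanded is not checked. A smaller inaccuracy: coarse invariance requires no Eilenberg swindle — the projection $\{0,1\}_{max,max}\otimes X\to X$ already induces an equivalence of additive categories $\bV^{G}_{\bA}(\{0,1\}_{max,max}\otimes X)\to\bV^{G}_{\bA}(X)$ (explicit natural isomorphism with the section at $0$), so any invariant sends it to an equivalence. Your flasqueness argument (transport the swindle through $\Ch^{b}(-)_{\infty}$ and use vanishing of localizing invariants on swindled stable $\infty$-categories) is a legitimate shortcut compared with the paper's Proposition \ref{eroigerggerger}, which proves vanishing of $\UK$ on flasque additive categories directly by an additivity argument.
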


The arguments for the proof are very similar to the case of usual $K$-theory and essentially copied from \cite{bekw}.

From the universal coarse homology theory $\UK\cX_{\bA}^{G}$ associated to $\bA$ one can derive new coarse homology theories which have not been considered so far. \begin{ex} \label{fweoifewfewfewf} An example is topological Hochschild homology $\mathrm{THH}$. In \cite{MR3070515} topological Hochschild homology was characterized as a localizing invariant $$\mathrm{THH}:\Cat^{ex}_{\infty}\to \Sp\ .$$ By the universal property   of the morphism \eqref{vcoihwvoihvvwefwefwefewf} there is an essentially unique factorization 
$$\xymatrix{\Cat^{ex}_{\infty}\ar[dr]_{\cU_{{loc}}}\ar[rr]^{\mathrm{THH}}&&\Sp\\&\cM_{loc}\ar[ur]_{\mathrm{THH}_{\infty}}&}\ .$$
This allows to define an equivariant coarse homology theory
$$\mathrm{THH}\cX^{{G}}_{\bA}:G\BC\stackrel{\UK\cX_{\bA}^{G}}{\to} \cM_{loc}\stackrel{\mathrm{THH}_{\infty}}{\to}\Sp\ .$$ \hB \end{ex}

In Section \ref{fiowejofwefewfewfwf} we package the coarse homology functors
$\UK\cX_{\bA}^{H}$ for the subgroups $H$ of $G$ together into a functor
$$\underline{\UK\cX}_{\bA}:G\BC\to \Fun(\Orb(G)^{op},\cM_{loc})\ .$$

We want to stress the following point. In order to define the functor 
$\underline{\UK \cX}_{\bA}$   (Definition \ref{eiufwfwefewfewf}) it is not necessary to know how to define  equivariant coarse homology homology theories. One only  needs the  non-equivariant version  $\UK\cX_{\bA}$. Equivariance is built in using  the coinduction functor which right-Kan extends $G$-equivariant objects to functors on $\Orb(G)^{op}$. 
This idea might be helpful in other cases where the non-equivariant version of a coarse homology is already known, while the details of a construction of an equivariant versions are not yet fixed. 
The construction of the equivariant homology theories using the coinduction functor plays an important role in the proof of the descent principle discussed in  \cite{descent}.

Our main result here, besides of the construction of $ \underline{\UK\cX_{\bA}}$, is:
\begin{theorem}[Theorem \ref{wfoijwfwfewf}]
$\underline{\UK\cX_{\bA}}$ is an equivariant homology theory.
\end{theorem}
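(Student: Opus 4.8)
\emph{Proof strategy.} The plan is to reduce the assertion to the non-equivariant functor $\UK\cX_{\bA}$ and to Theorem~\ref{fjoifjoweffewfewfef}. First I would unwind what has to be proved: $\underline{\UK\cX}_{\bA}$ is an equivariant homology theory precisely when, for every orbit $G/H$ in $\Orb(G)$, the evaluation
\[
\mathrm{ev}_{G/H}\circ\underline{\UK\cX}_{\bA}\colon G\BC\longrightarrow\Fun(\Orb(G)^{op},\cM_{loc})\longrightarrow\cM_{loc}
\]
is a $G$-equivariant coarse homology theory in the sense recalled in Section~\ref{foifioufouewfwefwefw}. This reduction is legitimate because colimits, cofibre sequences, cocartesian squares and the zero object of $\Fun(\Orb(G)^{op},\cM_{loc})$ are computed objectwise over $\Orb(G)^{op}$, so each of the four defining conditions (coarse invariance, excision, vanishing on flasque spaces, and $u$-continuity) holds for $\underline{\UK\cX}_{\bA}$ if and only if it holds for each such evaluation. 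I would also recall from Definition~\ref{eiufwfwefewfewf} that $\underline{\UK\cX}_{\bA}(X)$ is the coinduction, i.e.\ the right Kan extension along $BG\hookrightarrow\Orb(G)^{op}$, of the object $\UK\cX_{\bA}(X)$ of $\cM_{loc}$ equipped with the $G$-action that functoriality of $\UK\cX_{\bA}$ extracts from the $G$-action on $X$.

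The key step, and the one I expect to be the main obstacle, is the identification of the evaluations: for every subgroup $H\le G$ I would establish
\[
\mathrm{ev}_{G/H}\circ\underline{\UK\cX}_{\bA}\ \simeq\ \UK\cX^{H}_{\bA}\circ\Res^{G}_{H}\ .
\]
The pointwise formula for right Kan extensions presents the left-hand side as the limit of $\UK\cX_{\bA}(X)$, with its restricted $H$-action, over the transport groupoid of the transitive $G$-set $G/H$, which is equivalent to $BH$; the work is then to reconcile this coinductive description with the genuinely $H$-equivariant coarse $K$-homology of the restricted space $\Res^{G}_{H}X$. This is the coarse-geometric counterpart of the analogous comparison for ordinary algebraic $K$-theory, and it is here that one exploits that the additive categories $\bV^{H}_{\bA}(-)$ of equivariant controlled objects are well behaved under restriction and coinduction; I expect the argument to follow \cite{bekw} almost verbatim.

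Once the identification is in hand, I would conclude by transport of structure. The restriction functor $\Res^{G}_{H}\colon G\BC\to H\BC$ takes $G$-coarse equivalences to $H$-coarse equivalences, excisive pairs of $G$-invariant subsets to excisive pairs of $H$-invariant subsets, and flasque $G$-bornological coarse spaces to flasque $H$-bornological coarse spaces; and it respects the filtered colimits occurring in the $u$-continuity axiom, since the coarse structure of a $G$-bornological coarse space is generated by its $G$-invariant entourages, so that these are cofinal among its $H$-invariant entourages. Combined with Theorem~\ref{fjoifjoweffewfewfef}, which provides that $\UK\cX^{H}_{\bA}$ is an $H$-equivariant coarse homology theory, this yields all four conditions for $\mathrm{ev}_{G/H}\circ\underline{\UK\cX}_{\bA}$, and hence the theorem. (Should one wish to bypass the genuine equivariant theories, the four conditions can also be checked directly on the coinductive description $\lim_{BH}\UK\cX_{\bA}(-)$: excision and the Mayer--Vietoris sequences survive because $\lim_{BH}$ is exact, coarse invariance survives because the coarse homotopies produced in the proof of Theorem~\ref{fjoifjoweffewfewfef} are natural, and vanishing on flasque spaces survives because the $H$-homotopy fixed points of a zero object vanish.) The remaining, minor point is the compatibility of $\Res^{G}_{H}$ with the colimits governing $u$-continuity; apart from the identification above, everything else is the bookkeeping already carried out in \cite{bekw}.
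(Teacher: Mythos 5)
Your overall skeleton matches the paper's: reduce to the evaluations at $G/H$ (legitimate, since limits, colimits and equivalences in $\Fun(\Orb(G)^{op},\cM_{loc})$ are detected objectwise), identify $\underline{\UK\cX_{\bA}}^{(H)}$ with $\UK\cX^{H}_{\bA}\circ\Res^{G}_{H}$, and invoke Theorem \ref{wiwofewfewfewfewfw} for $H$ together with the easy observation that precomposition with $\Res^{G}_{H}$ preserves the four axioms. But there is a genuine gap at the key step, stemming from a misreading of Definition \ref{eiufwfwefewfewf}. You take $\underline{\UK\cX}_{\bA}(X)$ to be the coinduction (right Kan extension along $BG\to\Orb(G)^{op}$) of the object $\UK\cX_{\bA}(X)$ of $\cM_{loc}$ with its $G$-action, so that the evaluation at $G/H$ would be the homotopy fixed points $\lim_{BH}\UK\cX_{\bA}(X)$. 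That is not the definition: in the paper the coinduction is applied \emph{before} $\UK$, namely to $\ell^{+}(\bV^{+}_{\bA}(X))$ in $\Fun(BG,\Add^{+}_{\infty})$, and only afterwards does one apply $\cF_{\infty}$ and $\UK_{\infty}$. The distinction is essential: $\UK$ does not commute with $BH$-limits, so with your reading the identification $\underline{\UK\cX_{\bA}}^{(H)}\simeq\UK\cX^{H}_{\bA}\circ\Res^{G}_{H}$ would in general be false (already for $X$ a point and $H$ finite, the motive of the category of $H$-equivariant objects is not the $H$-homotopy fixed points of $\UK(\bA)$). For the same reason your parenthetical alternative --- checking the axioms directly on $\lim_{BH}\UK\cX_{\bA}(-)$ because ``$\lim_{BH}$ is exact'' --- does not go through for the axioms that involve filtered colimits (excision via big families, $u$-continuity, continuity), since $\lim_{BH}$ is an infinite limit for infinite $H$ and need not commute with filtered colimits.

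What the actual argument requires, and what your proposal is missing, is precisely the machinery of \emph{marked} additive categories. One needs (i) the computation, from Example \ref{rgiojerogregergrege} (relying on \cite{descent}), of the $BH$-limit in $\Add^{+}_{\infty}$: $\lim_{BH}\ell^{+}(\bA^{+})\simeq\ell^{+}(\hat\bA^{+,H})$, where $\hat\bA^{+,H}$ has objects $(M,\rho)$ with $\rho(h):M\to h(M)$ a compatible family of \emph{marked} isomorphisms; and (ii) the observation that for $\bV^{+}_{\bA}(X)$, marked means $\diag(X)$-controlled, so that condition \eqref{gpogjkpoegerg} is exactly the markedness of $\rho(g)$ and one gets a literal equality $\cF(\hat\bV^{+}_{\bA}(\Res^{G}_{H}X)^{H})=\bV^{H}_{\bA}(\Res^{G}_{H}X)$, whence the identification after applying $\UK$ via the factorization \eqref{fweoifjwer345345}. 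Without the marking, the fixed-point category would allow arbitrary isomorphisms $\rho(h)$ and would not reproduce $\bV^{H}_{\bA}$; this is the point you defer to ``\cite{bekw} almost verbatim,'' but it is not in \cite{bekw} in this form --- it is the content of the paper's Example \ref{rgiojerogregergrege} and of the concluding Lemma, and it is the reason the functor $\bV^{+}_{\bA}$ and the category $\Add^{+}$ are introduced at all.
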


\begin{rem}The descent principle itself does not seem to apply to the universal invariants.
The main additional input of the proof of the descent principle is the fact shown by   Pedersen \cite{MR1351941} that the
connective algebraic $K$-theory functor $K_{\ge 0}:\Add\to \Sp_{\ge 0}$ preserves products.
This result extends to the non-connective $K$-theory, but probably not to the functor $\UK$.
\end{rem}

\begin{rem}\label{fiowfweewfewfewwfew}
By Elmendorff's theorem the category $\Fun(\Orb(G)^{op},\Spc)$ can be considered as the home of $G$-equivariant homotopy theory. Bredon-type $G$-equivariant $\cC$-valued homology theories   are then represented by objects in $\Fun(\Orb(G),\cC)$, see Davis-L\"uck \cite{davis_lueck}. The values of the functor $ \underline{\UK\cX_{\bA}}$ must not be confused with  such homology theories since $\underline{\UK\cX_{\bA}}$ takes values in     contravariant functors instead of covariant ones. The Davis-L\"uck type equivariant homology theories can be recovered as follows.

To every set $S$ we can associate a bornological coarse space $S_{min,max}$ given by $S$ with the minimal coarse and the maximal bornological structures. This gives a functor
$$T:\Set\to \BC\ , \quad S\mapsto S_{min,max}\ ,$$
and hence a functor
$$T^{G}:\Fun(BG,\Set)\to G\BC\ .$$
The composition
$$\UK_{\bA}^{G}:\Orb(G)\to \Fun(BG,\Set)\stackrel{T^{G}}{\to} G\BC\stackrel{\UK\cX_{\bA}^{G}}{\to}\cM_{loc}$$
is a Bredon-type $\cM_{loc}$-valued equivariant homology theory. 
The equivariant algebraic $K$ associated to an additive category as considered in
  \cite{davis_lueck}, \cite{blr}, \cite{MR2030590} can now be recovered in the style of \eqref{fu98fu24ff23f} by
  $$\mathrm{K}^{G}_{\bA}(-)\simeq \map_{\cM_{loc}}(\cU_{loc}(\bS^{\omega}_{\infty}),\UK_{\bA}^{G}(-))\ .$$ Indeed this follows from \eqref{fu98fu24ff23f} and the equivalence
  $\mathrm{K}^{G}_{\bA}(-)\simeq \mathrm{K}\cX_{\bA}^{G}\circ T^{G}$ which we check in 
  \cite{bekw}.
 \hB  \end{rem}

{\em Acknowledgement: A great part of this work is a side product of the collaboration with
A. Engel, D. Kasprovski, Ch. Winges and M. Ullmann on various projects in equivariant coarse homotopy theory.  The authors were supported by the SFB 1085 (DFG).}

\section{The universal invariant of additive categories}\label{ewfoiwefewfewfw}

\subsection{The universal localizing invariant}
Let $\Cat^{ex}_{\infty}$ denote the $\infty$-category of small stable $\infty$-categories and exact functors.
{\begin{ddd}\label{weiofuweoifwfewfwf} \mbox{}
\begin{enumerate} \item A morphism $u:\bA\to \bB$ in $\Cat^{ex}_{\infty}$ is a Morita equivalence if
{it induces an equivalence of $\Ind$-completions $\Ind(u):\Ind(\bA)\to \Ind(\bB)$.}
\item {An exact sequence in $\Cat^{ex}_{\infty}$ is a commutative square of the form
$$\xymatrix{
\bA\ar[r]^i\ar[d]&\bB\ar[d]\\
0\ar[r]&\bC
}$$}{in which the morphism $i$ is fully faithful, such that
the induced functor $\bB/\bA\to \bC$ is a Morita equivalence.}
\end{enumerate} \end{ddd}
{\begin{rem}\label{fiofwfewfewff}
For a stable $\infty$-category $\bA$, the $\Ind$-completion can be identified with the
$\infty$-category of exact functors from the opposite of $\bA$ to the stable $\infty$-category
of spectra. The idempotent completion $\Idem(\bA)$ of $\bA$
can be defined as the full subcategory of compact objects in the $\Ind$-completion of $\bA$.
Moreover, the triangulated category $\Ho(\Ind(\bA))$ is compactly generated
(more precisely, the objects of $\bA$ form a generating family of compact generators),
and the triangulated category $\Ho(\Idem(\bA))$ is canonically equivalent to the
idempotent completion (or karoubianization) of the category $\Ho(\bA)$ (in the classical sense).
Therefore, one can characterize Morita equivalences as the exact functors $\bA\to\bB$
inducing an equivalence of $\infty$-categories $\Idem(\bA)\to\Idem(\bB)$.
\end{rem}}

Following  \cite{MR3070515} a functor $\Cat^{ex}_{\infty}\to \cC$ with $\cC$ a presentable stable $\infty$-category is called a localizing invariant if it inverts Morita equivalences, sends exact sequences to fibre sequences, and preserves filtered colimits.

In \cite{MR3070515}  a universal localizing invariant
$$\cU_{{loc}}:\Cat^{ex}_{\infty}\to \cM_{loc}$$ has been constructed.
The non-connective $K$-theory of small stable $\infty$-categories is a localizing invariant.
It can be derived from the universal invariant as follows.
Let $\bS^{\omega}_{\infty}$ be the small stable $\infty$-category of compact spectra and {$\bA$} be a small stable $\infty$-category.
Then by \cite[Thm. 1.3]{MR3070515} the non-connective $K$-theory spectrum of $\bA$ is given by
{\begin{equation}\label{fewuhfuiefewfwf}
K(\bA)\simeq \map_{\cM_{loc}}(\cU_{{loc}}(\bS^{\omega}_{\infty}),\cU_{{loc}}(\bA))\ .
\end{equation}}
\begin{rem}\label{rem:generic comparison non connective K-theory}
{There is also a connective $K$-theory spectrum  $K^{Wald}(\bA)$, which is the connective cover of $K(\bA)$, and
whose value at $\pi_0$ is the usual Grothendieck group of the triangulated category $\Ho(\Idem(\bA))$.} 
\begin{ddd}\mbox{}
\begin{enumerate}
\item A stable $\infty$-category $\bA$ is flasque if there exists an exact functor $S:\bA\to \bA$
such that there exists an equivalence $\id_{\bA}\oplus S\simeq S$.
\item A delooping of a stable $\infty$-category $\bA$ is a collection of exact sequences of $\infty$-categories of the form
$$\xymatrix{
S^n(\bA)\ar[r]\ar[d]&F^n(\bA)\ar[d]\\
0\ar[r]&S^{n+1}(\bA)}$$
for {all non negative integer $n$}, such that $F^n(\bA)$ is flasque for all $n$, together with
a Morita equivalence $\bA\to S^0(\bA)$.
\end{enumerate}
\end{ddd}
 
Given a delooping of a stable $\infty$-category $\bA$ as in the definition above, we have
the following commutative squares of spectra:
\begin{equation}\label{generic comparison non connective K-theory0}
\begin{split}
\xymatrix{
K^{Wald}(S^n(\bA))\ar[r]\ar[d]&K^{Wald}(F^n(\bA))\ar[d]\\
0\ar[r]&K^{Wald}(S^{n+1}(\bA))}\ .
\end{split}
\end{equation}
Since $K^{Wald}(F^n(\bA))\simeq 0$, these squares define canonical maps
\begin{equation}\label{generic comparison non connective K-theory1}
K^{Wald}(S^n(\bA))\to\Omega(K^{Wald}(S^{n+1}(\bA)))
\end{equation}
and hence maps
\begin{equation}\label{generic comparison non connective K-theory2}
\Omega^n(K^{Wald}(S^n(\bA)))\to\Omega^{n+1}(K^{Wald}(S^{n+1}(\bA)))\ .
\end{equation}
\end{rem}
\begin{prop}\label{prop:generic comparison non connective K-theory}
Given any choice of delooping of a stable $\infty$-category $\bA$,
there is a canonical equivalence of spectra
$$\colim_{n\geq 0}\Omega^n(K^{Wald}(S^n(\bA)))\simeq K(\bA)\ .$$
Furthermore, for any non negative integer $n$, there is a canonical isomorphism
$$K_0(\Idem(S^n(\bA)))\cong \pi_{-n}{(K(\bA))}\, ,$$
where $K_0={\pi_0\circ K^{Wald}}$ denotes the Grothendieck group functor.
\end{prop}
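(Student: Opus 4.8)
The plan is to build the equivalence in two stages: first identify $\colim_{n\ge 0}\Omega^n(K^{Wald}(S^n(\bA)))$ with the non-connective $K$-theory spectrum as produced by the Bass--Thomason--Trobaugh style delooping machine, and then match that with $K(\bA)$ as characterized via $\cU_{loc}$ in \eqref{fewuhfuiefewfwf}. For the first stage, I would argue directly from the structure maps \eqref{generic comparison non connective K-theory2}. Each exact sequence $S^n(\bA)\to F^n(\bA)\to S^{n+1}(\bA)$ in the chosen delooping is sent by $K^{Wald}$ to the square \eqref{generic comparison non connective K-theory0}; since $F^n(\bA)$ is flasque, an Eilenberg swindle argument (using $\id\oplus S\simeq S$ and that $K^{Wald}$ is additive, i.e. sends $\oplus$ to $\oplus$ and hence split exact sequences to split cofiber sequences) gives $K^{Wald}(F^n(\bA))\simeq 0$. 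The only delicate point is that $K^{Wald}$ sends exact sequences in $\Cat^{ex}_\infty$ to fiber sequences of connective spectra — this is the Waldhausen fibration/localization theorem in the $\infty$-categorical formulation (equivalently Blumberg--Gepner--Tabuada for connective $K$-theory), and I would cite it rather than reprove it. Granting that, each square \eqref{generic comparison non connective K-theory0} exhibits $K^{Wald}(S^n(\bA))$ as the fiber of $K^{Wald}(F^n(\bA))\to K^{Wald}(S^{n+1}(\bA))$, and since the total space is contractible the induced map $K^{Wald}(S^n(\bA))\to \Omega K^{Wald}(S^{n+1}(\bA))$ of \eqref{generic comparison non connective K-theory1} is an equivalence onto the connective cover of $\Omega K^{Wald}(S^{n+1}(\bA))$; more precisely it is a connective cover map, so looping $n$ more times and passing to the colimit produces a spectrum whose $n$-th space stabilizes the connective $K$-theories and hence is a (the) non-connective delooping.

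For the second stage I would invoke the universal property of $\cU_{loc}$ directly. The assignment $\bA\mapsto \colim_{n\ge 0}\Omega^n(K^{Wald}(S^n(\bA)))$, for a \emph{functorial} choice of delooping, is a localizing invariant: it inverts Morita equivalences (because $K^{Wald}$ does, and $S^n$ preserves Morita equivalences up to the Morita equivalence $\bA\to S^0(\bA)$), it preserves filtered colimits (connective $K$-theory does, $S^n$ and $F^n$ can be chosen to, and $\Omega^n$ and sequential colimits commute with filtered colimits — or one simply appeals to the standard fact that the Bass--Thomason--Trobaugh delooping of a finitary localizing invariant is again finitary), and it sends exact sequences to fiber sequences (by the connective localization theorem applied levelwise, since $\Omega$ and filtered colimits are exact). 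Hence by the universal property \eqref{vcoihwvoihvvwefwefwefewf} it factors through $\cU_{loc}$, and comparing its value on $\bS^\omega_\infty$ with the normalization built into \eqref{fewuhfuiefewfwf} — namely that $\pi_0 K^{Wald}(\bS^\omega_\infty)=\IZ$ and the map $\map_{\cM_{loc}}(\cU_{loc}(\bS^\omega_\infty),-)$ is determined by $K^{Wald}$ on connective covers — yields the canonical equivalence with $K(\bA)$. Alternatively, and perhaps more cleanly, I would note that $K(\bA)\simeq\map_{\cM_{loc}}(\cU_{loc}(\bS^\omega_\infty),\cU_{loc}(\bA))$ and that $\map_{\cM_{loc}}(\cU_{loc}(\bS^\omega_\infty),-)$ is itself a localizing invariant whose connective cover is $K^{Wald}$; then both sides of the claimed equivalence are the Bass delooping of the same connective functor applied to $\bA$, so they agree.

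The second statement is then a corollary: from the first part, $\pi_{-n}(K(\bA))\cong\pi_0(\Omega^n(K^{Wald}(S^n(\bA))))\cong\pi_n(K^{Wald}(S^n(\bA)))$ when $n\ge $ the relevant range — but more carefully, the colimit stabilizes so that $\pi_{-n}(K(\bA))\cong\pi_0(K^{Wald}(S^n(\bA)))=K_0(\Idem(S^n(\bA)))$ directly, using that the structure maps $\Omega^m K^{Wald}(S^m(\bA))\to\Omega^{m+1}K^{Wald}(S^{m+1}(\bA))$ are isomorphisms on $\pi_0$ for $m\ge n$ (they are connective-cover maps, hence $\pi_{\le -1}$-isomorphisms, and in particular $\pi_0$ here lands in the stable range after the first loop), together with the identification of $\pi_0 K^{Wald}$ with the Grothendieck group of $\Ho(\Idem(-))$ recalled in Remark~\ref{rem:generic comparison non connective K-theory}.

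\textbf{Main obstacle.} The crux is the bookkeeping of \emph{connective covers}: the maps \eqref{generic comparison non connective K-theory1} are not equivalences but only connective-cover maps, so one must check that after applying $\Omega^n$ and taking the colimit the negative homotopy groups are assembled correctly and that no information is lost — equivalently, that this hand-built colimit really reproduces the Bass--Thomason--Trobaugh non-connective delooping and not some truncation of it. The second genuine input one cannot avoid citing is the connective localization theorem (exact sequences $\mapsto$ fiber sequences of connective spectra), which is what makes the squares \eqref{generic comparison non connective K-theory0} homotopy cartesian in the first place; everything else is formal manipulation with the universal property of $\cU_{loc}$.
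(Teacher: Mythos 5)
There is a genuine gap, and it sits at the step you yourself flag as the ``only delicate point''. Connective $K$-theory $K^{Wald}$ does \emph{not} send exact sequences of $\Cat^{ex}_{\infty}$ to fiber sequences of spectra: it is only an additive invariant (it splits split exact sequences), and its failure to satisfy localization is concentrated exactly on $\pi_{0}$ --- this failure is what negative $K$-theory measures and is the entire reason the delooping $S^{n}(\bA)$ is introduced. So the squares \eqref{generic comparison non connective K-theory0} are \emph{not} homotopy cartesian in general, and there is no ``Waldhausen fibration/localization theorem'' to cite for this. Note the internal inconsistency: if those squares were cartesian, the maps \eqref{generic comparison non connective K-theory1} would be equivalences, the colimit $\colim_{n}\Omega^{n}K^{Wald}(S^{n}(\bA))$ would be connective, and the second assertion of the proposition ($\pi_{-n}(K(\bA))\cong K_{0}(\Idem(S^{n}(\bA)))$, which is generally nonzero for $n>0$) would be contradicted. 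Your immediate retreat to ``the map \eqref{generic comparison non connective K-theory1} is a connective cover map'' is the correct statement, but it is precisely the nontrivial input, and your proposal never proves it; establishing it is tantamount to already knowing the non-connective localization property you are trying to build. In the paper the maps \eqref{generic comparison non connective K-theory1} are only \emph{defined} from the commutativity of \eqref{generic comparison non connective K-theory0} together with $K^{Wald}(F^{n}(\bA))\simeq 0$ (a nullhomotopy gives a map to the fiber); no cartesianness is claimed.

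The paper's route avoids the issue and you should compare with it: apply the \emph{non-connective} functor $K(-)\simeq\map_{\cM_{loc}}(\cU_{loc}(\bS^{\omega}_{\infty}),\cU_{loc}(-))$ of \eqref{fewuhfuiefewfwf}, which is a localizing invariant by construction, to the chosen delooping. It kills the flasque categories $F^{n}(\bA)$ and sends the exact sequences to fiber sequences, so one gets genuine equivalences $K(S^{n}(\bA))\xrightarrow{\ \simeq\ }\Omega K(S^{n+1}(\bA))$ and hence $K(\bA)\simeq\colim_{n\geq 0}\Omega^{n}K(S^{n}(\bA))$. Then naturality of $K^{Wald}\to K$ produces the comparison map \eqref{generic comparison non connective K-theory5}, and since $K^{Wald}(S^{n}(\bA))$ is the connective cover of $K(S^{n}(\bA))$, computing $\pi_{i}$ of both filtered colimits (for fixed $i$, restrict to the cofinal range $n\geq -i$, where $\pi_{i+n}K^{Wald}\cong\pi_{i+n}K$) shows it is an equivalence; the isomorphism $K_{0}(\Idem(S^{n}(\bA)))\cong\pi_{-n}(K(\bA))$ is then read off on $\pi_{0}$. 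Your second stage is also misaligned with the statement: the universal-property argument needs a \emph{functorial} delooping and a proof that the resulting functor is localizing (which again rests on the false connective localization claim), whereas the proposition concerns an arbitrary delooping of a single $\bA$; the paper's direct homotopy-group comparison needs neither.
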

\begin{proof}
We have cofiber sequences of the form
$$\xymatrix{
K^{}(S^n(\bA))\ar[r]\ar[d]&K^{}(F^n(\bA))\ar[d]\\
0\ar[r]&K^{}(S^{n+1}(\bA))}\ ,$$
whence equivalences
\begin{equation}\label{generic comparison non connective K-theory3}
K^{}(S^n(\bA))\to\Omega(K^{}(S^{n+1}(\bA)))
\end{equation}
from which we deduce an equivalence
\begin{equation}\label{generic comparison non connective K-theory4}
K(\bA)\xrightarrow{\simeq}\colim_{n\geq 0}\Omega^n(K^{}(S^n(\bA)))\ .
\end{equation}
By naturality of the map $K^{Wald}\to K$, we get the commutative square
$$\xymatrix{
K^{Wald}(\bA)\ar[r]\ar[d]&\colim_{n\geq 0}\Omega^n(K^{Wald}(S^n(\bA)))\ar[d]\\
K(\bA)\ar[r]^(.3){\simeq}&\colim_{n\geq 0}\Omega^n(K^{}(S^n(\bA)))
}$$
of which any inverse of the bottom horizontal map gives a map
\begin{equation}\label{generic comparison non connective K-theory5}
\colim_{n\geq 0}\Omega^n(K^{Wald}(S^n(\bA)))\to K(\bA)
\end{equation}
Since $K^{Wald}(S^n(\bA))$ is the connective cover of $K(S^n(\bA))$,
the canonical map from $K^{Wald}_j(S^n(\bA))$ to $K_j(S^n(\bA)) \cong  K_{j-n}(\bA)$
is an isomorphism for all non negative integers $j$.
Since the formation of stable homotopy groups commutes with small
filtered colimits, we have canonical isomorphisms of groups for any integer $i$:
{\begin{align*}
\pi_i\big(\colim_{n\geq 0}\Omega^n(K^{Wald}(S^n(\bA)))\big)
&\cong\colim_{n\geq 0}\pi_{i+n}(K^{Wald}(S^{n}(\bA)))\\
&\cong\colim_{n\geq -i}\pi_{i+n}(K^{Wald}(S^{n}(\bA)))\\
&\cong\colim_{n\geq -i}\pi_{i+n}(K(S^{n}(\bA)))\\
&\cong\colim_{n\geq 0}\pi_{i+n}(K(S^{n}(\bA)))\\
&\cong\pi_i\big(\colim_{n\geq 0}\Omega^{n}(K(S^{n}(\bA)))\big)\\
&\cong K_i(\bA)\ .
\end{align*}}Therefore, map \eqref{generic comparison non connective K-theory5}
is a stable weak homotopy equivalence.
\end{proof}}

\subsection{A universal $K$-theory of additive categories}

An additive category $\bA$ can be considered as a Waldhausen category whose weak equivalences are the isomorphisms and whose cofibrations are the inclusions of direct summands. The connective $K$-theory {$K^{Wald}(\bA)$} is defined as the Waldhausen $K$-theory of this Waldhausen category
{(this agrees with Quillen's definition, up to a functorial equivalence)}.
{This} construction can be refined in order to produce a non-connective $K$-theory spectrum $K(\bA)$ such that
$\pi_{i}(K^{}(\bA))\cong \pi_{i}(K^{Wald}_{}(\bA))$ for all integers $i$, with $i\ge 1$ (or $i\ge 0$ if $\bA$ is idempotent complete); {see \cite{MR802790,MR2206639}.}
{In Proposition \ref{prop:agreement} below we provide an alternative description as a specialization of a universal $K$-theory for additive categories.}

We consider the categories $\Add$ and $\RelCat$ of small additive categories and relative categories. We have a functor  \begin{equation}\label{ecfiuwhiuwefwfwe}(\Ch^{b}(-),W_{h}):\Add\to \RelCat \end{equation}which sends an additive category $\bA$  to the relative category $(\Ch^{b}(\bA),W_{h})$ of bounded chain complexes and homotopy equivalences. We have a localization functor \begin{equation}\label{fiuzhriuferferferf}
 \cL:\RelCat\to \Cat_{\infty}  \end{equation} which sends a relative category $(\bC,W)$ to the localization $\bC[W^{-1}]$. For an additive category $\bA$ we will use the notation 
$$\Ch^{b}(\bA)_{\infty}:=\cL(\Ch^{b}(\bA),W_{h})\ .$$

\begin{rem}\label{oijoifwefewfwefwf} Assume that we model $\infty$-categories by quasi-categories. Then a model for the localization functor is given by
$$(\bC,W)\mapsto \Nerve((L^{H}_{W}\bC)^{fib})\ ,$$
where $L^{H}_{W}\bC$ is the hammock localization producing a simplicial category from a relative category, $(-)^{(fib)}$ is the fibrant replacement in simplicial categories, and $\Nerve$ is the coherent nerve functor which sends fibrant simplicial categories to quasi categories. 
\end{rem}

As explained for instance in \cite[Section 4]{MR1469141},
the category $\Ch^{b}(\bA)$ has the structure of a cofibration category where
\begin{enumerate}
\item weak equivalences are homotopy equivalences,
\item cofibrations are morphisms of chain complexes which are degree-wise split injections.
\end{enumerate}
Note that if $A\to B$ is a cofibration in $\Ch^{b}(\bA)$, then  the quotient $A/B$ exists.
In this case we say that $A\to B\to A/B$ is a short exact sequence.

Let \begin{equation}\label{d23uifzi2uf2f3f}
\ell:\Ch^{b}(\bA)\to \Ch^{b}(\bA)_{\infty}
\end{equation} denote the canonical morphism.

\begin{prop}\label{fweopfwefewfwef}
If $\bA$ is a small  additive category, then $\Ch^{b}(\bA)_{\infty}$
is a small stable $\infty$-category.
{Moreover, 
\begin{enumerate} \item  \eqref{d23uifzi2uf2f3f} 
 preserves the null object. \item  \eqref{d23uifzi2uf2f3f} sends 
 short exact sequences  to cofiber sequences. \end{enumerate}}
\end{prop}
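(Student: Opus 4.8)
The plan is to establish stability first by exhibiting $\Ch^b(\bA)_\infty$ as a localization of a cofibration category and invoking the general machinery relating cofibration categories to stable $\infty$-categories. Concretely, the structure of a cofibration category on $\Ch^b(\bA)$ recalled just above the statement (weak equivalences are homotopy equivalences, cofibrations are degreewise split injections) is the key input. First I would recall that the localization of a cofibration category with a zero object at its weak equivalences is a pointed $\infty$-category with finite colimits, in which pushouts along cofibrations are computed by the strict pushouts in $\Ch^b(\bA)$ (these exist because cokernels of degreewise split injections exist in an additive category). This gives both that $\ell$ preserves the zero object — $0$ is both initial and terminal in $\Ch^b(\bA)$, hence its image is a zero object — and that $\ell$ sends short exact sequences, which are exactly cofiber sequences in the cofibration-category sense, to pushout squares, i.e. cofiber sequences, in $\Ch^b(\bA)_\infty$. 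So parts (1) and (2) will essentially be formal consequences of the compatibility of $\cL$ with the cofibration-category structure.

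The heart of the argument is showing $\Ch^b(\bA)_\infty$ is stable, i.e. that it is pointed, has finite colimits (and limits), and that the suspension functor $\Sigma$ is an equivalence. Pointedness and finite cocompleteness follow as above. For the suspension: in $\Ch^b(\bA)$ the mapping cone construction models the pushout $0 \leftarrow A \to 0$ up to homotopy only after passing to chain complexes concentrated in a bounded range, and the shift functor $A \mapsto A[1]$ (degree shift) is a strict automorphism of $\Ch^b(\bA)$ preserving homotopy equivalences, hence descends to an automorphism of $\Ch^b(\bA)_\infty$. The point is then to identify this descended shift with the $\infty$-categorical suspension $\Sigma = \mathrm{cofib}(A \to 0)$. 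This is a standard fact: the mapping cone of $\id_A$, which is contractible, fits in a short exact sequence $A \to \Cone(\id_A) \to A[1]$, so applying $\ell$ and using part (2) together with $\ell(\Cone(\id_A)) \simeq 0$ yields a cofiber sequence $\ell(A) \to 0 \to \ell(A[1])$, exhibiting $\ell(A[1]) \simeq \Sigma \ell(A)$. Since $A \mapsto A[1]$ is invertible on $\Ch^b(\bA)_\infty$, $\Sigma$ is an equivalence, so the $\infty$-category is stable. Smallness is immediate since $\bA$ is small, $\Ch^b(\bA)$ has a small set of objects and morphisms, and localization does not change this.

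The main obstacle I expect is the careful verification that the localization functor $\cL$ is genuinely compatible with the cofibration-category structure — that pushouts along cofibrations in $\Ch^b(\bA)_\infty$ are computed strictly, and that the loop/suspension functors interact correctly — rather than merely that $\Ch^b(\bA)_\infty$ abstractly has some finite colimits. This requires either citing a precise theorem (for instance the exactness of localizations of cofibration categories, in the style of Cisinski's work or \cite{MR1469141}) or reproving the relevant part: that a short exact sequence, viewed as a commuting square with a zero corner, becomes a pushout square after localization. Once that is in hand, one must still check that $\Ch^b(\bA)_\infty$ is also finitely complete; the cleanest route is to observe that $\Ch^b(\bA)$ is simultaneously a fibration category (with degreewise split surjections as fibrations), apply the dual statement, and then conclude stability from: pointed, finitely complete, finitely cocomplete, and $\Sigma$ an equivalence. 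I would organize the write-up so that the cofibration/fibration category formalism does all the work, keeping the additive-category-specific input confined to the single observation that mapping cones exist strictly and the shift is a strict automorphism.
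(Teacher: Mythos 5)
Your proposal is essentially correct, but it reaches stability by a genuinely different final step than the paper, and it treats the key technical input as a black box. The paper does not cite a general theorem on localizations of cofibration categories; it proves the needed compatibility by hand: it first reduces to the localization at the trivial cofibrations (Ken Brown factorization), then computes the mapping spaces of $\Ch^{b}(\bA)_{\infty}$ via the hammock localization and the Dwyer--Kan/Weiss category $\cM(A,B)$ of cospans, and uses Weiss's theorem that pushouts along cofibrations induce homotopy Cartesian squares of these mapping spaces, whence $\ell$ preserves such pushouts; finite limits and the dual statements are obtained exactly as you suggest, by passing to $\bA^{op}$ (equivalently, the fibration structure given by degreewise split surjections). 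Where you diverge is the stability criterion: you invoke ``pointed $+$ finite colimits $+$ $\Sigma$ an equivalence'' and identify $\Sigma$ with the shift via the sequence $A\to\Cone(\id_{A})\to A[1]$, whereas the paper shows directly that every cofiber sequence is (up to equivalence) the image of a strict pushout along a cofibration, hence also the image of a strict pullback along a degreewise split surjection, so cofiber sequences coincide with fiber sequences, which is Lurie's definition of stability; the shift--suspension identification then appears only afterwards, as Corollary \ref{fewiowefwefewfwf}. Your route is legitimate and arguably shorter once the localization machinery is granted, but note two points you would still have to supply: (i) the statement that pushouts along cofibrations are computed strictly in the localization is precisely the nontrivial content the paper establishes via the explicit mapping-space computation, so citing it precisely (Cisinski/Szumi{\l}o-style results on cofibration categories) or reproving it is unavoidable; and (ii) to conclude that $\Sigma$ is an equivalence you need the identification $\Sigma\circ\ell\simeq\ell\circ[1]$ as functors, not merely the objectwise equivalences $\Sigma(\ell(A))\simeq\ell(A[1])$; this follows from the functoriality of the cone construction and of the associated short exact sequence, but it should be said explicitly, since the paper's chosen criterion sidesteps this naturality issue altogether.
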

\begin{proof}
Let $W_{ch}$ denote the subset of $W_{h}$ of  homotopy equivalences which are in addition cofibrations. 
\begin{lem}\label{fweiowefwefwef}
The morphism of relative categories $(\Ch^{b}(\bA),W_{ch})\to (\Ch^{b}(\bA),W_{h})$ induces an equivalence  
 $\cL(\Ch^{b}(\bA),W_{ch})\to \Ch^{b}(\bA)_{\infty}$
 of $\infty$-categories.
\end{lem}\begin{proof} Since $\Ch^{b}(\bA)$ is a cofibration category, by Ken Brown's Lemma every morphism $f:A\to B$ in $\Ch^{b}(\bA)$ has a factorization 
$$\xymatrix{A\ar[rr]^{f}\ar[dr]^{i}&&B\ar@/^0.5cm/@{-->}[dl]^{r}\\&C\ar[ur]^{p,\simeq}}\ ,$$
where $i$ is a cofibration, $p$ is a weak equivalence, and furthermore $r$ is a cofibration such that $p\circ r=\id_{B}$. If $f$ is in addition a weak equivalence, then so are $i$ and $r$.  So every functor from $\Ch^{b}(\bA)$ to an infinity category which sends the elements of $W_{ch}$ to equivalences also sends the elements of $W_{h}$ to equivalences. This implies the assertion in view of {the universal property of the functor
\eqref{d23uifzi2uf2f3f}}.\end{proof}

By \cite[Thm. 2.2.0.1 and Prop. 2.2.4.1]{htt} and Lemma \ref{fweiowefwefwef}
the mapping spaces of the  $\infty$-category $\Ch^{b}(\bA)_{\infty}$
are represented by the simplicial mapping sets of the 
 hammock localization
$(L^{H}_{W_{ch}}\Ch^{b}(\bA))^{fib}$, see Remark \ref{oijoifwefewfwefwf}. More precisely,   for objects $A$ and $B$ of $\Ch^{b}(\bA)$ we have a canonical homotopy equivalence {\begin{equation}\label{frefojfoiff2f}
\Map_{\Ch^{b}(\bA)_{\infty}}(\ell(A),\ell(B))\simeq \Map_{(L^{H}_{W_{ch}}\Ch^{b}(\bA))^{fib}}(A,B) \ .
\end{equation}}The homotopy type of the simplicial mapping sets in the hammock localization   can be computed 
using a method first introduced by Dwyer and Kan, and further studied by Weiss in \cite[Rem. 1.2]{weiss}.
For two objects $A$ and $B$ of $\Ch^{b}(\bA)$ we consider the following category
$\cM(A,B)$:\begin{enumerate}
\item The objects are pairs $(f,s)$, where
$f:A\to C$  is just a morphism and $s:B\to C$ a trivial cofibration.
\item A morphism  $u:(f,s)\to (f',s')$ is a commutative diagram
of the form
$$\xymatrix@R=8pt@C=25pt{
&C\ar[dd]^u&\\
A\ar[ur]^f\ar[dr]_{f'}&&B\ar[ul]_s\ar[dl]^{s'}\\
&C'&
}\ .$$ \item The composition is defined in the obvious way.
\end{enumerate}
Then by  \cite[Rem. 1.2]{weiss} we have a canonical weak equivalence of simplicial sets
\begin{equation}\label{lfjoijfoif24f2f2}
\Nerve(\cM(A,B))\simeq \Map_{(L^{H}_{W_{ch}}\Ch^{b}(\bA))^{fib}}(A,B)\ .
\end{equation}

We now consider a pushout square in $\Ch^{b}(\bA)$ of the form
\begin{equation}\label{eq:fweopfwefewfwef1}
\begin{split}
\xymatrix{
A\ar[d]_i\ar[r]^a&A'\ar[d]^{i'}\\
B\ar[r]^b&B'
}\end{split}
\end{equation}
in which the map $i$ is a cofibration. By virtue of \cite[Thm. 2.1]{weiss},
for any object $E$ in $\Ch^{b}(\bA)$, the commutative square of simplicial sets
\begin{equation}\label{eq:fweopfwefewfwef2}
\begin{split}
\xymatrix{
\Nerve(\cM(A,E))&\Nerve(\cM(A',E))\ar[l]\\
\Nerve(\cM(B,E))\ar[u]&\Nerve(\cM(B^{\prime},E))\ar[u]\ar[l]
}\end{split}
\end{equation}
is homotopy Cartesian. 
This implies, by \eqref{lfjoijfoif24f2f2}, that{\begin{equation}\label{eq:fweopfwefewfweef2}
\begin{split}
\xymatrix{
\Map_{(L^{H}_{W_{ch}}\Ch^{b}(\bA))^{fib}}(A,B)(A,E)&\Map_{(L^{H}_{W_{ch}}\Ch^{b}(\bA))^{fib}}(A,B)(A',E)\ar[l]\\
\Map_{(L^{H}_{W_{ch}}\Ch^{b}(\bA))^{fib}}(A,B)(B,E)\ar[u]&\Map_{(L^{H}_{W_{ch}}\Ch^{b}(\bA))^{fib}}(A,B)(B^{\prime},E)\ar[u]\ar[l]
}\end{split}
\end{equation}}is {a} homotopy Cartesian square of Kan complexes.
Consequently, by virtue of \cite[Thm. 4.2.4.1]{htt} (and possibly \cite[Remark A.3.3.13]{htt}),
the square \eqref{eq:fweopfwefewfwef1}
is sent by $\ell$ to a pushout square in $\Ch^{b}(\bA)_{\infty}$.

Similarly, for every object $A$ of $\Ch^{b}(\bA)$ the simplicial set
$\Nerve(\cM(A,0))$  is contractible since it is 
the nerve of a category with an initial object. Hence $\ell$ sends
the null complex to a terminal object in $\Ch^{b}(\bA)_{\infty}$.

Note that the opposite of an additive category is again an additive category.
Appying what precedes to the opposite category of $\bA$,
we see that $\ell$ sends $0$ also to  an initial object in $\Ch^{b}(\bA)_{\infty}$,
and that any pullback square of the form 
\eqref{eq:fweopfwefewfwef1} in which the map $b$
is a degree-wise split surjection is sent   to a  pull-back
square in $\Ch^{b}(\bA)_{\infty}$.

The explicit description   of the mapping spaces in $\Ch^{b}(\bA)_{\infty}$ given by \eqref{frefojfoiff2f} and \eqref{lfjoijfoif24f2f2}
implies that, up to equivalence, all maps
of $\Ch^{b}(\bA)_{\infty}$ come from maps of $\Ch^{b}(\bA)$.
In particular, by virtue of \cite[Cor. 4.4.2.4]{htt} and its dual
version, we have proved that the $\infty$-category
$\Ch^{b}(\bA)_{\infty}$ has finite limits and finite colimits, and that
$0=\ell(0)$ is a null object. 

Since, up to equivalence, any
morphism in  $\Ch^{b}(\bA)_{\infty}$ is the image under $\ell$ of a cofibration,   any cofiber sequence of $\Ch^{b}(\bA)_{\infty}$ is the image
under  $\ell$ of a pushout square of the form
$$\xymatrix{
A\ar[r]^i\ar[d]&B\ar[d]^p\\
0\ar[r]&C
}$$
in which $i$ is a  cofibration.
But such a square also is a pullback square, with $p$ a degree-wise
split surjection, and therefore, the functor $\ell$ sends such a square to
a fiber sequence. In other words, any cofiber sequence in $\Ch^{b}(\bA)_{\infty}$
is a fiber sequence. Replacing $\bA$ by its opposite category,
we see that we also proved the converse: any fiber sequence in $\Ch^{b}(\bA)_{\infty}$
is a cofiber sequence. In other words, the $\infty$-category $\Ch^{b}(\bA)_{\infty}$
is stable in the sense of \cite[Def. 1.1.1.9]{halg}, and we have proved the proposition.
\end{proof}

\begin{rem}\label{foiuwoiefwefwefewfew}
For an additive category $\bA$ the category $\Ch^{b}(\bA)$ has a natural $dg$-enriched refinement $\Ch^{b}(\bA)_{dg}$.  By  \cite[Thm. 1.3.1.10]{halg} we can thus form a small $\infty$-category $\Nerve_{dg}(\Ch^{b}(\bA)_{dg})$.  It is easy to check that
this   category is pointed by the zero complex, has finite colimits (sums and push-outs exist), and that the suspension is represented by the shift. Consequently, $\Nerve_{dg}(\Ch^{b}(\bA)_{dg})$ is a stable $\infty$-category. In fact, \cite[1.3.2.10]{halg} asserts this for the  $dg$-nerve $\Nerve_{dg}(\Ch(\bA)_{dg})$ of the $dg$-category of not necessarily bounded chain complexes. We can consider $\Nerve_{dg}(\Ch^{b}(\bA)_{dg})$ as a full subcategory of $\Nerve_{dg}(\Ch(\bA)_{dg}) $  which is stable under finite colimits.

By the universal property of the localization functor \eqref{fiuzhriuferferferf} the natural morphism
$\Ch^{b}(\bA)\to \Nerve_{dg}(\Ch^{b}(\bA)_{dg})$ factorizes through a morphism
\begin{equation}\label{fweoifwoie2455t3545}
\Ch^{b}(\bA)_{\infty}\to \Nerve_{dg}(\Ch^{b}(\bA)_{dg})\ .
\end{equation}
This morphism induces an equivalence of homotopy categories
$$\Ho(\Ch^{b}(\bA)_{\infty})\stackrel{\simeq}{\to} \Ho(\Nerve_{dg}(\Ch^{b}(\bA)_{dg}))\ .$$
Proposition \ref{fweopfwefewfwef} is now equivalent to the fact that \eqref{fweoifwoie2455t3545} is an equivalence of $\infty$-categories.
\hB
\end{rem}

Let $A$ be an object of $\Ch^{b}(\bA)$.
\begin{kor}\label{fewiowefwefewfwf}
{We have the relation $\Sigma(\ell(A))\simeq \ell(A[1])$.}
\end{kor}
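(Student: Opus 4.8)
The plan is to deduce this directly from the stability of $\Ch^b(\bA)_\infty$ established in Proposition~\ref{fweopfwefewfwef}, together with the explicit compatibility of $\ell$ with the short exact sequences coming from the cone construction. Recall that in a stable $\infty$-category the suspension $\Sigma X$ is computed as the cofiber of the map $X \to 0$, equivalently as the pushout $0 \sqcup_X 0$. So the key is to exhibit, for an object $A$ of $\Ch^b(\bA)$, a short exact sequence in $\Ch^b(\bA)$ whose image under $\ell$ is the relevant pushout square with corners $\ell(A)$, $0$, $0$, $\ell(A[1])$.

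First I would recall the standard construction: the mapping cone $\Cone(\id_A)$ of the identity of $A$ is degree-wise $A_n \oplus A_{n-1}$ with the usual twisted differential, and it is contractible (it is the cone on an identity, hence $\ell(\Cone(\id_A)) \simeq 0$ since $\ell$ preserves homotopy equivalences and $\Cone(\id_A)\to 0$ is a homotopy equivalence). There is a short exact sequence of chain complexes
$$
A \xrightarrow{\ \iota\ } \Cone(\id_A) \xrightarrow{\ \pi\ } A[1]
$$
in which $\iota$ is the degree-wise split inclusion $A_n \hookrightarrow A_n \oplus A_{n-1}$ and $\pi$ is the degree-wise split projection onto $A_{n-1} = (A[1])_n$. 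Thus $\iota$ is a cofibration in the cofibration-category structure on $\Ch^b(\bA)$, and the quotient is $A[1]$, so this is a short exact sequence in the sense fixed before Proposition~\ref{fweopfwefewfwef}.

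Next I would invoke part~(2) of Proposition~\ref{fweopfwefewfwef}: $\ell$ sends this short exact sequence to a cofiber sequence
$$
\ell(A) \to \ell(\Cone(\id_A)) \to \ell(A[1])
$$
in $\Ch^b(\bA)_\infty$. Since $\Cone(\id_A)$ is contractible, $\ell(\Cone(\id_A)) \simeq 0$ by part~(1) of the same proposition (or directly because $\ell$ inverts homotopy equivalences and the zero object is $\ell(0)$). Hence we obtain a cofiber sequence $\ell(A) \to 0 \to \ell(A[1])$, i.e.\ a pushout square
$$
\xymatrix{
\ell(A)\ar[r]\ar[d] & 0\ar[d]\\
0\ar[r] & \ell(A[1])
}
$$
in $\Ch^b(\bA)_\infty$. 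By the definition of the suspension functor in a stable $\infty$-category (Proposition~\ref{fweopfwefewfwef} guarantees $\Ch^b(\bA)_\infty$ is stable, so $\Sigma$ exists and is computed by exactly this pushout), this square exhibits $\ell(A[1])$ as $\Sigma(\ell(A))$, and the identification is canonical and natural in $A$.

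The only point requiring a little care — and the main (minor) obstacle — is making sure the identification of the cofiber of $\ell(A)\to 0$ with $\Sigma(\ell(A))$ is not merely an abstract equivalence but the canonical one coming from the universal property of the pushout; this is immediate once one notes that the connecting map $\ell(\Cone(\id_A))\to \ell(A[1])$ becomes, after contracting the middle term, precisely the structure map of the pushout defining $\Sigma$. One could alternatively run this argument through the $dg$-model of Remark~\ref{foiuwoiefwefwefewfew}, where the equivalence $\Ch^b(\bA)_\infty \simeq \Nerve_{dg}(\Ch^b(\bA)_{dg})$ reduces the claim to the well-known fact that in the $dg$-nerve the suspension is represented by the shift $A\mapsto A[1]$; this is perhaps the cleanest route and I would mention it as a remark.
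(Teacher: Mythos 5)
Your argument is correct and coincides with the paper's own proof: both use the short exact sequence $A\to \Cone(\id_{A})\to A[1]$, apply Proposition \ref{fweopfwefewfwef} to get a cofiber sequence, and use contractibility of $\Cone(\id_{A})$ to identify $\ell(A[1])$ with $\Sigma(\ell(A))$.
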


\begin{proof}{ Let $A$ be an object of $\Ch^{b}(\bA)$. By Proposition \ref{fweopfwefewfwef},
the functor $\ell$ sends
  the short exact sequence
 $$A\to \Cone(\id_{A})\to A[1]$$
  to  the cofiber sequence 
{$$\xymatrix{
\ell(A)\ar[r]\ar[d]&\ell({\Cone(\id_{A}))}\ar[d]\\
0\ar[r]&\ell({A[1]})
}$$}Since $0\to \Cone(\id_{A})$ is a chain homotopy equivalence, this 
cofiber sequence exhibits $\ell(A[1])$ as the suspension of $\ell(A)$} in $\Ch^{b}(\bA)_{\infty}$.\end{proof}
\begin{prop}
The functor
$\Ch^{b}(-)_{\infty} $ has a natural factorization
$$\xymatrix{&&\Cat^{ex}_{\infty}\ar[d]\\\Add\ar[rr] \ar@{-->}[urr]^{\Ch^{b}(-)_{\infty}} && \Cat_{\infty}}\ .$$
\end{prop}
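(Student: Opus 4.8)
The plan is to reduce the assertion to two pointwise facts about the functor $\bA\mapsto\Ch^{b}(\bA)_{\infty}\colon\Add\to\Cat_{\infty}$: that each value $\Ch^{b}(\bA)_{\infty}$ is a stable $\infty$-category, and that each additive functor $F\colon\bA\to\bB$ is carried to an exact functor. Recall that $\Cat^{ex}_{\infty}$ is the subcategory of $\Cat_{\infty}$ whose objects are the stable $\infty$-categories and whose morphisms are the exact functors; since a subcategory inclusion of $\infty$-categories is detected on objects and on homotopy classes of morphisms, these two facts together immediately produce the desired factorization $\Add\to\Cat^{ex}_{\infty}\to\Cat_{\infty}$. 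The first fact is exactly Proposition \ref{fweopfwefewfwef}, so all the work is in the second.

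To prepare the second fact I would first observe that an additive functor $F\colon\bA\to\bB$, applied degreewise, induces an exact functor of cofibration categories $\Ch^{b}(F)\colon\Ch^{b}(\bA)\to\Ch^{b}(\bB)$: it preserves the null complex, sends degreewise split injections to degreewise split injections, and---being computed degreewise and commuting with finite direct sums---preserves pushouts along cofibrations and short exact sequences; it also preserves chain homotopy equivalences. By the universal property of the localization \eqref{fiuzhriuferferferf}, equivalently of \eqref{d23uifzi2uf2f3f}, the composite $\ell\circ\Ch^{b}(F)$ then factors through $\ell$, giving a functor $\Ch^{b}(F)_{\infty}\colon\Ch^{b}(\bA)_{\infty}\to\Ch^{b}(\bB)_{\infty}$ together with a commuting square relating $\Ch^{b}(F)$ and $\Ch^{b}(F)_{\infty}$ through the two localization maps. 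This assignment is functorial in $F$ and is the functor $\Ch^{b}(-)_{\infty}$ we started with.

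It then remains to check that $\Ch^{b}(F)_{\infty}$ is exact, for which it is enough that it preserve the zero object and cofiber sequences. Preservation of the zero object is immediate from the commuting square, since $\ell(0)$ is a null object by Proposition \ref{fweopfwefewfwef}. For cofiber sequences I would exploit the proof of Proposition \ref{fweopfwefewfwef}, where it is shown that, up to equivalence, every cofiber sequence in $\Ch^{b}(\bA)_{\infty}$ is the $\ell$-image of a short exact sequence $A\to B\to A/B$ in $\Ch^{b}(\bA)$ with $A\to B$ a cofibration. Applying $\Ch^{b}(F)$ to such a short exact sequence produces a short exact sequence in $\Ch^{b}(\bB)$, which $\ell$ turns into a cofiber sequence by Proposition \ref{fweopfwefewfwef}(2); passing through the commuting square shows $\Ch^{b}(F)_{\infty}$ carries the original cofiber sequence to a cofiber sequence. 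Hence $\Ch^{b}(F)_{\infty}$ is exact and the factorization exists.

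The main obstacle is not a calculation but the careful reuse of the reduction ``every cofiber square in $\Ch^{b}(\bA)_{\infty}$ is, up to equivalence, the $\ell$-image of a genuine short exact sequence in $\Ch^{b}(\bA)$''---the content buried in the last paragraphs of the proof of Proposition \ref{fweopfwefewfwef}, which rests on the Dwyer--Kan and Weiss model \eqref{lfjoijfoif24f2f2} for the mapping spaces and on Ken Brown's lemma to replace an arbitrary span by one along a cofibration. An alternative, perhaps lighter, route is via the $dg$-nerve picture of Remark \ref{foiuwoiefwefwefewfew}: an additive functor $F$ induces a $dg$-functor, hence an exact functor $\Nerve_{dg}(\Ch^{b}(\bA)_{dg})\to\Nerve_{dg}(\Ch^{b}(\bB)_{dg})$ essentially by inspection, and one then only needs to transport this along the equivalences \eqref{fweoifwoie2455t3545}, which is once more handled by the universal property of $\ell$.
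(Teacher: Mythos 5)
Your proposal is correct, and its skeleton coincides with the paper's: the factorization is detected on objects and on homotopy classes of morphisms (the paper phrases this as a pullback square exhibiting $\Cat^{ex}_{\infty}$ over the non-full subcategory of $\Ho(\Cat_{\infty})$ of stable categories and exact functors), the object part is exactly Proposition \ref{fweopfwefewfwef}, and all remaining work is to see that $\Ch^{b}(f)_{\infty}$ is exact. Where you diverge is in how exactness is verified. The paper takes the lightest possible route: citing \cite[Cor. 1.4.2.14]{halg}, it reduces to preservation of the zero object and of the suspension, and then invokes Corollary \ref{fewiowefwefewfwf}, which identifies $\Sigma\ell(A)\simeq\ell(A[1])$; since $\Ch^{b}(f)$ obviously commutes with the shift, there is nothing left to check. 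You instead verify preservation of cofiber sequences directly, by reusing the reduction buried in the proof of Proposition \ref{fweopfwefewfwef} (every cofiber sequence is, up to equivalence, the $\ell$-image of a short exact sequence along a degreewise split injection), together with the observation that an additive functor applied degreewise preserves such short exact sequences. This is valid and self-contained, though somewhat heavier, and it leaves one small standard point implicit: that preserving the zero object and cofiber sequences suffices for exactness of a functor between stable $\infty$-categories. The quickest way to close that is to note that your argument already yields preservation of suspensions (as $\Sigma X$ is the cofiber of $X\to 0$), so you land back on the paper's criterion; alternatively one argues that a square in a stable $\infty$-category is a pushout if and only if the induced map on horizontal cofibers is an equivalence, a condition your functor visibly preserves. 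Your suggested $dg$-nerve shortcut is also sound and is essentially the content of Remark \ref{foiuwoiefwefwefewfew}, transported along the equivalence \eqref{fweoifwoie2455t3545}.
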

\begin{proof}
Exactness of functors and stability of $\infty$-categories are detected in $\Ho(\Cat_{\infty})$.
We let $\bE$ be  be the non-full subcategory of  $\Ho(\Cat_{\infty})$
consisting of stable $\infty$-categories and exact functors. Then we have a pull-back
$$\xymatrix{\Cat^{ex}_{\infty}\ar[r]\ar[d]&\Cat_{\infty}\ar[d]\\\bE\ar[r]&\Ho(\Cat_{\infty})}\ .$$
By Proposition \ref{fweopfwefewfwef}, the functor $\Ch^{b}(-)_{\infty}$ takes values in stable $\infty$-categories.
We must show that it send a morphism  $f:\bA\to \bA^{\prime}$ between additive categories to an exact functor. 
By {\cite[Cor. 1.4.2.14]{halg}} we must show that $\Ch^{b}(f)_{\infty}$ preserves the zero object and suspension.
It clearly preserves the zero object (represented by the zero complex). By Corollary \ref{fewiowefwefewfwf} in the domain and target of $\Ch^{b}(f)_{\infty}$
the suspension is represented by the shift and $\Ch^{b}(f)$ clearly preserves the shift,   $\Ch^{b}(f)_{\infty}$ also preserves suspension. 
\end{proof}

%
\begin{ddd}\label{coiwehcoiwcwecw}
We define the  functor $\UK:\Add\to \cM_{loc}$ as the composition
{$$\UK:\Add\stackrel{\Ch^{b}(-)_{\infty}}{\to}  \Cat^{ex}_{\infty}\stackrel{\cU_{loc}}{\to} \cM_{loc}\ .$$}
\end{ddd}
 
%

\subsection{Properties of the universal $K$-theory for additive categories}

Recall that a morphism between additive categories is a functor between the underlying
categories which preserves finite coproducts. It is an equivalence if the underlying morphism of categories is an equivalence. 
 More generally,  it is a Morita equivalence if it induces an equivalence of idempotent completions.
Finally, recall  the similar Definition \ref{weiofuweoifwfewfwf} (in conjuction with Remark \ref{fiofwfewfewff}) for  morphisms between stable $\infty$-categories.

Let $\bA$ be an additive category.
\begin{lem}\label{fowipfwfewfewf}
The morphism  $\bA\to \Idem(\bA)$ induces a Morita equivalence
$$\Ch^{b}(\bA)_{\infty}\to \Ch^{b}(\Idem(\bA))_{\infty}\ .$$
\end{lem}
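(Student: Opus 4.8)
The plan is to reduce the statement to the characterization of Morita equivalences of stable $\infty$-categories given in Remark \ref{fiofwfewfewff}, namely as exact functors inducing an equivalence of idempotent completions (equivalently, of $\Idem(-)$, equivalently an equivalence after passing to $\Ind$-completions). So I must show that the induced exact functor $\Idem(\Ch^{b}(\bA)_{\infty})\to \Idem(\Ch^{b}(\Idem(\bA))_{\infty})$ is an equivalence, or, what amounts to the same, that the functor $\Ch^{b}(\bA)_{\infty}\to \Ch^{b}(\Idem(\bA))_{\infty}$ is fully faithful and its image generates the target under finite colimits and retracts.

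First I would record fully faithfulness. The embedding $\bA\to\Idem(\bA)$ is fully faithful, hence so is $\Ch^{b}(\bA)\to\Ch^{b}(\Idem(\bA))$ on the level of chain complexes, compatibly with homotopy equivalences and with the cofibration category structures used in Proposition \ref{fweopfwefewfwef}. Using the explicit model of mapping spaces from \eqref{frefojfoiff2f} and \eqref{lfjoijfoif24f2f2} (the nerve of the Dwyer–Kan–Weiss category $\cM(A,B)$, computed inside the respective cofibration categories), together with the fact that any idempotent-split surjection or cofibration between complexes of $\bA$-objects stays in $\Ch^{b}(\bA)$, one checks that the comparison map of mapping spaces $\Map_{\Ch^{b}(\bA)_{\infty}}(\ell A,\ell B)\to \Map_{\Ch^{b}(\Idem(\bA))_{\infty}}(\ell A,\ell B)$ is an equivalence for $A,B$ in $\Ch^{b}(\bA)$; since up to equivalence all objects and morphisms of these $\infty$-categories come from $\Ch^{b}(-)$, this gives full faithfulness of the exact functor.

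Next I would address essential surjectivity after idempotent completion. Every object of $\Ch^{b}(\Idem(\bA))$ is a bounded complex whose entries are retracts of objects of $\bA$; by a standard argument (building the complex degree by degree, using that a retract of $A_n\in\bA$ is a summand of $A_n$ and that the stable $\infty$-category $\Ch^{b}(\Idem(\bA))_{\infty}$ is closed under finite colimits and retracts) one shows that $\Ho(\Ch^{b}(\Idem(\bA))_{\infty})$ is, as a triangulated category, the idempotent completion of the thick subcategory generated by the image of $\bA$. Concretely: an object $P$ of $\Idem(\bA)$, viewed as a complex concentrated in degree $0$, is a retract of an object of $\bA$ in the homotopy category; hence $\ell(P)$ lies in $\Idem$ of the image, and general bounded complexes over $\Idem(\bA)$ are obtained from such objects by iterated (co)fiber sequences, i.e. are already in the thick closure of the image in $\Ho(\Ch^{b}(\Idem(\bA))_{\infty})$. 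Therefore the induced functor on idempotent completions is essentially surjective, and being also fully faithful, it is an equivalence; by Remark \ref{fiofwfewfewff} the original functor is a Morita equivalence.

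The main obstacle I anticipate is the full faithfulness step: making the comparison of the Weiss model categories $\cM(A,B)$ for the two cofibration categories genuinely an equivalence requires checking that trivial cofibrations $B\to C$ in $\Ch^{b}(\Idem(\bA))$ out of an object $B$ of $\Ch^{b}(\bA)$ can be replaced, up to the relevant homotopy, by ones living in $\Ch^{b}(\bA)$ — i.e. that the inclusion of Weiss categories is homotopy cofinal. This is where one must use more than the bare fully faithfulness of $\bA\to\Idem(\bA)$; the cleanest route may be to instead invoke the $dg$-model of Remark \ref{foiuwoiefwefwefewfew}, where $\Ch^{b}(\bA)_{\infty}\simeq \Nerve_{dg}(\Ch^{b}(\bA)_{dg})$ and full faithfulness is immediate from full faithfulness of $\Ch^{b}(\bA)_{dg}\to\Ch^{b}(\Idem(\bA))_{dg}$ as $dg$-categories, reducing the whole lemma to the triangulated-category statement about idempotent completions, which is classical.
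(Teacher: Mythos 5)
Your proposal is correct and takes essentially the same route as the paper: full faithfulness of $\Ch^{b}(\bA)_{\infty}\to\Ch^{b}(\Idem(\bA))_{\infty}$ is obtained from the $dg$-nerve description of Remark \ref{foiuwoiefwefwefewfew} (your fallback is exactly the paper's argument, which bypasses the delicate Weiss-model comparison you flag), and essential surjectivity after idempotent completion is deduced, as in the paper, from the fact that $\Ch^{b}(\Idem(\bA))_{\infty}$ is generated as a stable subcategory by degree-zero objects of $\Idem(\bA)$, each a retract of an object of $\bA$, together with retract-closure of the essential image.
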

\begin{proof}
In view of Definition \ref{weiofuweoifwfewfwf} and Remark \ref{fiofwfewfewff}  we must show that the induced morphism
$$c:\Idem(\Ch^{b}(\bA)_{\infty})\to \Idem( \Ch^{b}(\Idem(\bA))_{\infty})$$
is an equivalence. We first observe that for any additive category $\bB$ the natural morphism $\bB\to \Idem(\bB)$ induces a fully faithful embedding
$\Ch^{b}(\bB)_{\infty}\to \Ch^{b}(\Idem(\bB))_{\infty}$. This is obvious if we use the description of the $\infty$-categories of chain complexes via $dg$-nerves as discussed in Remark    \ref{foiuwoiefwefwefewfew}.
We now have a commuting diagram 
$$\xymatrix{\Ch^{b}(\bA)_{\infty}\ar[r]\ar[d]&\Ch^{b}(\Idem(\bA))_{\infty}\ar[d]^{i}\\\Idem(\Ch^{b}(\bA)_{\infty})\ar[r]^(0.4){c}
&\Idem(\Ch^{b}(\Idem(\bA))_{\infty})}\ .$$
Since the vertical morphisms and the upper horizontal morphism are fully-faithful, so is $c$.
It remains to show that $c$ is essentially surjective. 
We note that the morphism $i$ identifies  $\Ch^{b}(\Idem(\bA))_{\infty}$ with the smallest full stable  subcategory of $ \Idem( \Ch^{b}(\Idem(\bA))_{\infty})$  containing the   objects in the image of the composition \begin{equation}\label{f3498u9348f34f}
\Idem(\bA)\stackrel{[0]}{\to} \Ch^{b}(\Idem(\bA))\stackrel{\ell}{\to} \Ch^{b}(\Idem(\bA))_{\infty}\ .
\end{equation} 
In view of the commuting diagram $$\xymatrix{&\Ch^{b}(\Idem(\bA))_{\infty}\ar[dr]^{i}&\\\Idem(\bA)\ar[ur]^{\eqref{f3498u9348f34f}}\ar[dr]^{\Idem(\ell\circ [0])}&& \Idem( \Ch^{b}(\Idem(\bA))_{\infty})\\&\Idem(\Ch^{b}(\bA)_{\infty})\ar[ur]^{c}&
 }$$ every object in  the image of the composition  $i\circ \eqref{f3498u9348f34f}$ also belongs to the essential image of $c$.  Hence $c$ is essentially surjective and we have shown the lemma \end{proof}

{\begin{rem}
{A particular case of a result of
Balmer and Schlichting \cite[Thm. 2.8]{MR1813503}  states} that,
for any small idempotent complete additive category $\bA$, the triangulated
category $\Ho(\Ch^b(\bA)_\infty)$ is idempotent complete.
Applying this result to $i$ and  the proof of Lemma \ref{fowipfwfewfewf} to $c$ we conclude that
 for any small additive category $\bA$, the natural functors
$$\Idem(\Ch^b(\bA)_\infty)\stackrel{c}{\to}\Idem(\Ch^b(\Idem(\bA))_\infty)\stackrel{i}\leftarrow\Ch^b(\Idem(\bA))_\infty$$
are equivalences of $\infty$-categories.
\end{rem}}

 
\begin{lem}\label{roirujoigegergergg}
{The functor $\UK:\Add\to \cM_{{loc}}$ sends Morita
equivalences of additive categories to equivalences in $ \cM_{loc}$.}
\end{lem}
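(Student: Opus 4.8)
The plan is to factor the statement through the universal localizing invariant $\cU_{loc}$. Since $\UK = \cU_{loc}\circ \Ch^{b}(-)_{\infty}$ by Definition \ref{coiwehcoiwcwecw}, and $\cU_{loc}$ inverts Morita equivalences of stable $\infty$-categories by construction (see \cite{MR3070515}), it suffices to show that the functor $\Ch^{b}(-)_{\infty}\colon \Add\to \Cat^{ex}_{\infty}$ sends a Morita equivalence $f\colon \bA\to \bB$ of additive categories to a Morita equivalence of stable $\infty$-categories.

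The key observation is that a Morita equivalence of additive categories factors, essentially by definition, through the idempotent completion: $f$ induces an equivalence $\Idem(\bA)\xrightarrow{\simeq}\Idem(\bB)$. So first I would reduce to two cases by the two-out-of-three property for equivalences: (i) the canonical morphism $\bA\to\Idem(\bA)$, and (ii) an equivalence of additive categories. Case (ii) is immediate, since an equivalence of additive categories induces an equivalence on bounded chain complexes compatible with homotopy equivalences, hence an equivalence of localizations $\Ch^{b}(\bA)_{\infty}\to\Ch^{b}(\bB)_{\infty}$. Case (i) is precisely the content of Lemma \ref{fowipfwfewfewf}: the morphism $\bA\to\Idem(\bA)$ induces a Morita equivalence $\Ch^{b}(\bA)_{\infty}\to\Ch^{b}(\Idem(\bA))_{\infty}$ of stable $\infty$-categories. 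Combining these, for a general Morita equivalence $f\colon\bA\to\bB$ the commuting square
$$\xymatrix{
\Ch^{b}(\bA)_{\infty}\ar[r]\ar[d]&\Ch^{b}(\bB)_{\infty}\ar[d]\\
\Ch^{b}(\Idem(\bA))_{\infty}\ar[r]^{\simeq}&\Ch^{b}(\Idem(\bB))_{\infty}
}$$
has the two vertical maps Morita equivalences (Lemma \ref{fowipfwfewfewf}) and the bottom map an equivalence (case (ii) applied to $\Idem(f)$), so the top map is a Morita equivalence by two-out-of-three for equivalences of $\Idem$-completions.

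Finally I would apply $\cU_{loc}$ and invoke that it is a localizing invariant, hence inverts Morita equivalences, to conclude that $\UK(f)$ is an equivalence in $\cM_{loc}$. There is no real obstacle here: the substance has already been extracted into Lemma \ref{fowipfwfewfewf}, and what remains is the formal bookkeeping of factoring an arbitrary Morita equivalence through idempotent completions and applying two-out-of-three. The only point requiring a word of care is checking that $\Idem(f)\colon\Idem(\bA)\to\Idem(\bB)$ is genuinely an equivalence of additive categories (not merely a Morita equivalence) — but this is exactly the definition of $f$ being a Morita equivalence of additive categories as recalled just before Lemma \ref{fowipfwfewfewf}.
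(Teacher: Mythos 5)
Your proposal is correct and follows essentially the same route as the paper's own proof: reduce via $\cU_{loc}$ to showing $\Ch^{b}(-)_{\infty}$ sends Morita equivalences to Morita equivalences, handle equivalences and the idempotent-completion map $\bA\to\Idem(\bA)$ (Lemma \ref{fowipfwfewfewf}) separately, and conclude for a general Morita equivalence by the commuting square through $\Idem(f)$. No substantive differences.
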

\begin{proof}
{Since the functor $\cU_{loc}$ sends Morita equivalences of stable $\infty$-categories
to equivalences of spectra, it is sufficient to prove that
the functor $\bA\mapsto\Ch^b(\bA)_\infty$ sends Morita equivalences of
additive categories to Morita equivalences of $\infty$-categories.}

First of all, this functor sends equivalences of additive categories to equivalences
{of $\infty$-categories} because the functor {$\bA\mapsto\Ch^b(\bA)$} preserves
equivalences of categories, and because the localization
functor \eqref{fiuzhriuferferferf} sends equivalences
of {(relative) categories} to equivalences of $\infty$-categories.

For an additive category $\bA$ the morphism $\bA\to \Idem(\bA)$ is a Morita equivalence of additive categories. 
By Lemma \ref{fowipfwfewfewf}  we get a Morita equivalence
$\Ch^{b}(\bA)_{\infty}\to \Ch^{b}(\Idem(\bA))_{\infty}$ of stable $\infty$-categories.

Any Morita equivalence $f:\bA\to\bA'$ between additive categories
fits into a commutative square of the form
$$\xymatrix{
\bA\ar[r]^{f}\ar[d]&\bA'\ar[d]\\
\Idem(\bA)\ar[r]^{\Idem(f)}&\Idem(\bA')
}$$
in which the functor $\Idem(f)$ is an equivalence of categories.
By the observations made above the functor {$\Ch^b(-)_\infty$} sends the two vertical and the lower horizontal morphism to equivalences.
This readily implies that the map $\UK(f)$ is an equivalence {from $\Ch^b(\bA)_\infty$
to $\Ch^b(\bA')_\infty$.}
 \end{proof}

Let $\bA$ be an additive category. 
\begin{ddd}\label{wefifjweifoefwefwff}
{A full additive subcategory $\bC$ of $\bA$ is a Karoubi-filtration if every diagram
$$X\to Y\to Z$$ in $\bA$ with $X,Z\in \bC$} admits an extension
 $$\xymatrix{X\ar[d]\ar[r]&Y\ar[r]\ar[d]^{\cong}&Z \\U\ar[r]^{incl}&U\oplus U^{\perp}\ar[r]^{\pr}&U\ar[u]}$$
with $U\in \bC$.
\end{ddd}{In \cite[Lemma 5.6]{KasFDC} it is shown that  Definition \ref{wefifjweifoefwefwff} is equivalent to the standard definition of a Karoubi filtration as considered in  \cite{MR1469141}.}

A morphism in $\bA$ is called completely continuous (c.c.) if it factorizes over a morphism  in $\bC$. 
We can form a new additive category $\bA/\bC$ with the same objects as $\bA$ and the morphisms
$$\Hom_{\bA/\bC}(A,A^{\prime}):=\Hom_{\bA}(A,A^{\prime})/c.c.\ .$$

 Assume that $\bC$ is a Karoubi filtration in an additive category $\bA$.
     \begin{lem}\label{riowgrgergggergerg} 
The sequence of stable $\infty$-categories
$$\Ch^{b}( \bC)_{\infty}\to \Ch^{b}( \bA)_{\infty}\to \Ch^{b}( \bA/ \bC)_{\infty}$$
is exact.
\end{lem}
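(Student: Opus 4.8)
The plan is to reduce the exactness of the sequence of stable $\infty$-categories to the known exactness of the corresponding sequence of additive categories, together with the two structural facts already established: that $\Ch^b(-)_\infty$ sends Morita equivalences to Morita equivalences (Lemma \ref{roirujoigegergergg} and Lemma \ref{fowipfwfewfewf}), and that short exact sequences in $\Ch^b(-)$ are sent to cofiber sequences (Proposition \ref{fweopfwefewfwef}). By Definition \ref{weiofuweoifwfewfwf}, what must be shown is that $\Ch^b(\bC)_\infty\to\Ch^b(\bA)_\infty$ is fully faithful and that the induced functor on Verdier quotients $\Ch^b(\bA)_\infty/\Ch^b(\bC)_\infty\to\Ch^b(\bA/\bC)_\infty$ is a Morita equivalence.

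First I would treat fully faithfulness of $\Ch^b(\bC)_\infty\to\Ch^b(\bA)_\infty$. Using the description of the mapping spaces via the categories $\cM(A,B)$ of zig-zags from the proof of Proposition \ref{fweopfwefewfwef} (equations \eqref{frefojfoiff2f} and \eqref{lfjoijfoif24f2f2}), one checks that for objects $A,B$ of $\Ch^b(\bC)$ the mapping space computed inside $\Ch^b(\bC)$ agrees with the one computed inside $\Ch^b(\bA)$; the key point is that $\bC$ is a full additive subcategory closed under the relevant factorizations, so that the comparison of nerves of zig-zag categories is an equivalence. (Alternatively one can invoke the $dg$-nerve description from Remark \ref{foiuwoiefwefwefewfew}: $\Ch^b(\bC)_{dg}$ is a full $dg$-subcategory of $\Ch^b(\bA)_{dg}$, and $dg$-nerves of full $dg$-subcategories are full subcategories, so $\Ch^b(\bC)_\infty\to\Ch^b(\bA)_\infty$ is fully faithful.)

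Next I would identify the Verdier quotient. The functor $\Ch^b(\bA)_\infty\to\Ch^b(\bA/\bC)_\infty$ kills $\Ch^b(\bC)_\infty$ (a complex in $\bC$ becomes contractible in $\bA/\bC$, since all its differentials and identities become c.c., hence zero), so it factors through $\Ch^b(\bA)_\infty/\Ch^b(\bC)_\infty$. To see the induced functor $q:\Ch^b(\bA)_\infty/\Ch^b(\bC)_\infty\to\Ch^b(\bA/\bC)_\infty$ is a Morita equivalence, it suffices, by Remark \ref{fiofwfewfewff}, to check it induces an equivalence after idempotent completion, and for this I would verify it is essentially surjective up to summands and fully faithful. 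Essential surjectivity up to summands is clear: both sides have the same objects as $\bA/\bC$, namely those of $\bA$, placed in degree $0$, and these generate. For full faithfulness of $q$ one computes $\Hom$ in the Verdier quotient as a filtered colimit over zig-zags with cone in $\Ch^b(\bC)_\infty$; the Karoubi-filtration hypothesis (Definition \ref{wefifjweifoefwefwff}) is precisely what is needed to show that this colimit computes $\Hom_{\Ch^b(\bA/\bC)_\infty}$, i.e. that every morphism becoming zero in $\bA/\bC$ factors through an object of $\bC$ after the appropriate replacement, and that the replacements needed to realize morphisms of $\bA/\bC$ can be carried out inside $\bA$. This is the classical statement that a Karoubi filtration of additive categories induces a Verdier localization on bounded derived (here: stable $\infty$-) categories, and it is where the hypothesis genuinely enters.

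\textbf{Main obstacle.} The hard part will be the last step: showing that $q$ is fully faithful, i.e. that the Verdier quotient $\Ch^b(\bA)_\infty/\Ch^b(\bC)_\infty$ has the ``correct'' mapping spaces, namely those of $\Ch^b(\bA/\bC)_\infty$. This is exactly where the Karoubi-filtration axiom is used, and the cleanest route is to cite the corresponding statement for triangulated categories — e.g. the treatment in \cite{MR1469141} together with the equivalence of the two notions of Karoubi filtration from \cite[Lemma 5.6]{KasFDC} — and to lift it to the $\infty$-categorical level using that, by Proposition \ref{fweopfwefewfwef} and Remark \ref{foiuwoiefwefwefewfew}, $\Ho(\Ch^b(\bA)_\infty)$ is the bounded derived category of $\bA$ in the classical sense, so that a Verdier localization on homotopy categories together with the universal property of the $\infty$-categorical Verdier quotient (and a Morita/idempotent-completion comparison via Remark \ref{fiofwfewfewff}) yields the required equivalence. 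One subtlety to handle carefully is idempotent completeness: the naive Verdier quotient of $\infty$-categories need not be idempotent complete, which is why the conclusion is stated as a Morita equivalence rather than an equivalence, and why one passes to $\Idem(-)$ — here one again uses Lemma \ref{fowipfwfewfewf} exactly as in its proof of $c$ being essentially surjective.
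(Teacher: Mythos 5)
Your plan is correct and is essentially the paper's argument: both reduce the statement to the classical triangulated-level fact that a Karoubi filtration induces a Verdier localization of bounded homotopy categories of complexes (up to idempotent completion), and transfer it to stable $\infty$-categories using Morita invariance of $\Ch^b(-)_\infty$ under $\Idem$. The paper merely packages the transfer differently — it first replaces $\bA$, $\bC$ by their idempotent completions (using that $\Idem$, as a left adjoint, identifies $\Idem(\bA/\bC)$ with $\Idem(\Idem(\bA)/\Idem(\bC))$) and then invokes the criterion of Blumberg--Gepner--Tabuada that exactness may be checked on homotopy categories, citing Schlichting for the triangulated statement, which is the same crux you defer to.
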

\begin{proof}
{As observed in Lemma \ref{fowipfwfewfewf},
for any additive category $\bB$, the natural {functor}
$\Ch^{b}(\bB)_{\infty}\to\Ch^{b}(\Idem(\bB))_{\infty}$
is a Morita equivalence of stable $\infty$-categories.}
{Furthermore the canonical functor
$\Idem(\bA/\bC)\to\Idem(\Idem(\bA)/\Idem( \bC))$
is an equivalence of categories,
because the functor $\Idem$ (being a left adjoint)
sends pushouts of additive categories to pushouts in the
category of idempotent complete additive categories.
Therefore, the canonical functor
$$\Ch^b(\bA/\bC)_{{\infty}}\to\Ch^b(\Idem(\bA)/\Idem(\bC))_{{\infty}}$$
is a Morita equivalence of stable $\infty$-categories.}

{All this means that, without loss of generality, it is
sufficient to prove the case where $\bC$
is idempotent complete (since we may replace $\bA$ and $\bC$
by their idempotent completions at will).}
By \cite[Prop. 5.15]{MR3070515} it suffices to show that,
{under this extra assumption on $\bC$},
$$\Ho(\Ch^{b}(\bC)_{\infty})\to \Ho( \Ch^{b}(\bA)_{\infty})\to \Ho(\Ch^{b}(\bA/\bC)_{\infty})$$
is exact. {In view of Example \cite[Ex. 1.8]{MR2079996}
this is shown in \cite[Prop. 2.6]{MR2079996}.} \end{proof}

Assume that $\bC$ is a Karoubi filtration in an additive category $\bA$.
\begin{prop}\label{roijoregergegr}
We have the following fibre sequence in $\cM_{loc}$.
$$\UK(\bC)\to \UK(\bA)\to  \UK(\bA/\bC)$$
\end{prop}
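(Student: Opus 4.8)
The plan is to obtain this as a formal consequence of Lemma \ref{riowgrgergggergerg} together with the defining property of the universal localizing invariant. Recall from Definition \ref{coiwehcoiwcwecw} that $\UK$ is the composite $\Add\xrightarrow{\Ch^{b}(-)_{\infty}}\Cat^{ex}_{\infty}\xrightarrow{\cU_{loc}}\cM_{loc}$, and that $\cU_{loc}$ is by construction a localizing invariant; in particular it sends exact sequences of small stable $\infty$-categories, in the sense of Definition \ref{weiofuweoifwfewfwf}, to fibre sequences in the presentable stable $\infty$-category $\cM_{loc}$.

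The first step is to feed the Karoubi filtration $\bC\subseteq\bA$ into Lemma \ref{riowgrgergggergerg}, which produces the exact sequence of small stable $\infty$-categories
$$\Ch^{b}(\bC)_{\infty}\to\Ch^{b}(\bA)_{\infty}\to\Ch^{b}(\bA/\bC)_{\infty}\ ;$$
concretely, the first functor is fully faithful and the induced functor $\Ch^{b}(\bA)_{\infty}/\Ch^{b}(\bC)_{\infty}\to\Ch^{b}(\bA/\bC)_{\infty}$ on Verdier quotients is a Morita equivalence. The second step is simply to apply $\cU_{loc}$ to this sequence: since $\cU_{loc}$ carries exact sequences to fibre sequences, we obtain the fibre sequence $\UK(\bC)\to\UK(\bA)\to\UK(\bA/\bC)$ in $\cM_{loc}$, which is the assertion.

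There is no real obstacle at this stage: all of the genuine content has already been isolated in Lemma \ref{riowgrgergggergerg} (whose proof reduces, via idempotent completions, to the triangulated statement about Verdier quotients of bounded chain complex categories cited there) and in the existence and universal property of $\cU_{loc}$ established in \cite{MR3070515}. If one insisted on an argument that does not black-box Lemma \ref{riowgrgergggergerg}, the one point needing care would be identifying the additive quotient $\bA/\bC$ — obtained by killing the completely continuous morphisms — with the Verdier-type quotient at the level of $\Ch^{b}(-)_{\infty}$, and that identification is exactly what the results cited in the proof of Lemma \ref{riowgrgergggergerg} supply.
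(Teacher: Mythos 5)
Your argument is correct and coincides with the paper's proof: both deduce the statement by applying the exact sequence of Lemma \ref{riowgrgergggergerg} and the fact that $\cU_{loc}$, which enters $\UK$ via Definition \ref{coiwehcoiwcwecw}, sends exact sequences of stable $\infty$-categories to fibre sequences in $\cM_{loc}$. Nothing further is needed.
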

\begin{proof}
%
{This immediately follows from {Lemma \ref{riowgrgergggergerg}, Definition \ref{coiwehcoiwcwecw},
and the fact that $\cU_{loc}$ sends exact sequences to fibre sequences.}} \end{proof}

{\begin{ddd}
An additive category $\bA$ is called flasque if there exists an  endo-functor $S:\bA\to \bA$   such that {$\id_{\bA}\oplus S\simeq S$.}
\end{ddd}}

 Let $\bA$ be a small additive category. By $K(\bA)$, we denote  Schlichting's non-connective $K$-theory spectrum. Its  stable homotopy groups in non-negative degrees $i$ agree with Quillen's $K$-theory groups of $\Idem(\bA)$  denoted above by  $K^{Wald}_i(\Idem(\bA))$, and with the  $K$-theory groups defined by Pedersen and Weibel in negative degrees.
 
 Let $\bA$ be a small additive category. 
 {\begin{prop}\label{prop:agreement}
There is a canonical equivalence of spectra
$$K(\bA)\simeq\map_{\cM_{loc}}(\cU_{{loc}}(\bS^{\omega}_{\infty}),\UK(\bA))\ .$$
\end{prop}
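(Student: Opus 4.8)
The plan is to reduce the statement to the identity \eqref{fewuhfuiefewfwf} of Blumberg--Gepner--Tabuada applied to the stable $\infty$-category $\Ch^{b}(\bA)_{\infty}$, and then to identify the right-hand side of \eqref{fewuhfuiefewfwf} with Schlichting's non-connective $K$-theory $K(\bA)$. By Definition \ref{coiwehcoiwcwecw} we have $\UK(\bA)=\cU_{loc}(\Ch^{b}(\bA)_{\infty})$, so \eqref{fewuhfuiefewfwf} immediately gives
$$\map_{\cM_{loc}}(\cU_{{loc}}(\bS^{\omega}_{\infty}),\UK(\bA))\simeq K(\Ch^{b}(\bA)_{\infty})\ ,$$
where on the right $K$ denotes the non-connective $K$-theory of a small stable $\infty$-category. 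Thus everything comes down to producing a canonical equivalence $K(\Ch^{b}(\bA)_{\infty})\simeq K(\bA)$ between the non-connective $K$-theory of the stable $\infty$-category $\Ch^{b}(\bA)_{\infty}$ and Schlichting's non-connective $K$-theory of the additive category $\bA$.

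For this I would first treat the connective statement. The second part of Proposition \ref{fweopfwefewfwef} says that $\ell\colon\Ch^{b}(\bA)\to\Ch^{b}(\bA)_{\infty}$ sends short exact sequences to cofiber sequences, and by Corollary \ref{fewiowefwefewfwf} it is compatible with suspension; combined with the fact that every morphism in $\Ch^{b}(\bA)_{\infty}$ is, up to equivalence, the image of a cofibration (established in the proof of Proposition \ref{fweopfwefewfwef}), this shows that $\ell$ exhibits $\Ch^{b}(\bA)_{\infty}$ as the stable $\infty$-category underlying the Waldhausen category $(\Ch^{b}(\bA),W_{h})$ in the sense of the Gillet--Waldhausen/Thomason comparison. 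Hence $K^{Wald}(\Ch^{b}(\bA))\simeq K^{Wald}(\Ch^{b}(\bA)_{\infty})$, where on the right one uses the connective $K$-theory of the stable $\infty$-category, namely the connective cover of $K(\Ch^{b}(\bA)_{\infty})$. On the other hand, the Gillet--Waldhausen theorem identifies the $K$-theory of the Waldhausen category of bounded chain complexes over $\bA$ with the $K$-theory of the additive (exact) category $\bA$: $K^{Wald}(\Ch^{b}(\bA))\simeq K^{Wald}(\bA)$. Putting these together, $K^{Wald}(\bA)\simeq K^{Wald}(\Ch^{b}(\bA)_{\infty})$, naturally in $\bA$.

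To pass to the non-connective statement, I would use Proposition \ref{prop:generic comparison non connective K-theory}: choose a delooping of the stable $\infty$-category $\Ch^{b}(\bA)_{\infty}$ and compute $K(\Ch^{b}(\bA)_{\infty})$ as $\colim_{n\ge 0}\Omega^{n}(K^{Wald}(S^{n}(\Ch^{b}(\bA)_{\infty})))$. In parallel, Schlichting's $K(\bA)$ is built by the analogous negative-delooping procedure (via Karoubi filtrations / the cone and suspension constructions on additive categories, which by Lemma \ref{riowgrgergggergerg} and Proposition \ref{roijoregergegr} are compatible with $\Ch^{b}(-)_{\infty}$ and send exact sequences of additive categories to fiber sequences). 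Since the connective comparison is natural and compatible with these delooping steps — the suspension of additive categories corresponds under $\Ch^{b}(-)_{\infty}$ to a delooping datum for $\Ch^{b}(\bA)_{\infty}$, because $\Ch^{b}(-)_{\infty}$ turns the Karoubi-filtration sequences into exact sequences of stable $\infty$-categories — the colimit over $n$ of the connective equivalences yields the desired equivalence $K(\bA)\simeq K(\Ch^{b}(\bA)_{\infty})$, and one also recovers the homotopy-group identification $K_{0}(\Idem(S^{n}(\bA)))\cong\pi_{-n}K(\bA)$ already recorded in Proposition \ref{prop:generic comparison non connective K-theory}.

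The main obstacle I anticipate is the bookkeeping of \emph{naturality and compatibility} of the Gillet--Waldhausen equivalence with the two (a priori different) deloopings: one must check that Schlichting's suspension functor $s\bA$ on additive categories and the abstract delooping of the stable $\infty$-category $\Ch^{b}(\bA)_{\infty}$ agree, i.e. that $\Ch^{b}(s\bA)_{\infty}$ is flasque (its $K$-theory vanishes because $s\bA$ is flasque as an additive category, via the Eilenberg swindle) and that the Karoubi-filtration sequence $\bA\to c\bA\to s\bA$ is carried by $\Ch^{b}(-)_{\infty}$ to an exact sequence of stable $\infty$-categories — which is exactly Lemma \ref{riowgrgergggergerg}. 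Granting that, the connective comparison on each level of the delooping tower is just Gillet--Waldhausen applied levelwise, and the non-connective comparison follows by taking the colimit; the argument is essentially the one already rehearsed in the proof of Proposition \ref{prop:generic comparison non connective K-theory}.
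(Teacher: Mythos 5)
Your proposal follows essentially the same route as the paper's proof: reduce via \eqref{fewuhfuiefewfwf} to identifying $K(\Ch^{b}(\bA)_{\infty})$ with Schlichting's $K(\bA)$, compare connective covers via Gillet--Waldhausen together with the agreement of Waldhausen $K$-theory with the connective $K$-theory of the associated stable $\infty$-category (the paper cites \cite[Cor.~7.12]{MR3070515} for this), and then pass to the non-connective level by feeding Karoubi's cone/suspension construction through Lemma \ref{riowgrgergggergerg} and Proposition \ref{prop:generic comparison non connective K-theory}. Two concrete points need repair, however. First, it is the cone $C(\bA)$, not the suspension $S(\bA)=C(\bA)/\bA$, that is flasque via the Eilenberg swindle; if $S(\bA)$ were flasque, all negative $K$-groups of $\bA$ would vanish and there would be nothing to deloop. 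What the delooping datum of Proposition \ref{prop:generic comparison non connective K-theory} requires is that the middle terms $F^{n}=\Ch^{b}(C(S^{n}(\bA)))_{\infty}$ be flasque and that the Karoubi-filtration sequences become exact sequences of stable $\infty$-categories (Lemma \ref{riowgrgergggergerg}); your parenthetical claim that $\Ch^{b}(s\bA)_{\infty}$ is flasque is false as stated and should be corrected accordingly.

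Second, your connective comparison ``$K^{Wald}(\Ch^{b}(\bA))\simeq$ connective cover of $K(\Ch^{b}(\bA)_{\infty})$'' fails at $\pi_{0}$ when $\bA$ is not idempotent complete: the left-hand side has $\pi_{0}\cong K_{0}(\bA)$, whereas the connective cover of the Morita-invariant non-connective $K$-theory has $\pi_{0}\cong K_{0}(\Idem(\bA))$. Consequently you cannot literally take ``the colimit over $n$ of the connective equivalences'': the levelwise comparison maps are isomorphisms only in positive degrees. The paper circumvents this by first passing to $\Idem(\bA)$ (Lemma \ref{fowipfwfewfewf}) for the connective identification, and then arguing that the induced map of colimits is an equivalence because it is an isomorphism in non-positive degrees, where $\pi_{-n}$ of both sides is identified with $K_{0}(\Idem(S^{n}(\bA)))$ --- by Schlichting's theorem on one side and by the last assertion of Proposition \ref{prop:generic comparison non connective K-theory} on the other. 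With these two repairs your argument coincides with the paper's proof.
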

\begin{proof}
{Using Lemma \ref{fowipfwfewfewf} and the fact that $\cU_{loc}$ sends Morita equivalences to equivalences,
Formula \eqref{fewuhfuiefewfwf} expresses $\map_{\cM_{loc}}(\cU_{{loc}}(\bS^{\omega}_{\infty}),\UK(\bA))$
as the non-connective $K$-theory spectrum $K(\Ch^b(\Idem(\bA))_\infty)$.
By virtue of \cite[Cor. 7.12]{MR3070515}
the connective cover of $K(\Ch^b(\Idem(\bA))_\infty)$ agrees with Waldhausen's K-theory of $\Ch^b(\Idem(\bA))$,
which in turns agrees with Quillen's $K$-theory of $\Idem(\bA)$ by
the Gillet-Waldhausen theorem (as shown in \cite{TT}).
There is a construction, due to Karoubi, and recalled in \cite[Example 5.1]{MR802790},
which associates functorially, to any small additive category $\bA$,
a flasque additive category $C(\bA)$, together with an additive full embedding
$\bA\to C(\bA)$ which turns $\bA$ into a Karoubi filtration of $C(\bA)$.
The suspension $S(\bA)$ of $\bA$ is then defined as the quotient $C(\bA)/\bA$.
Iterating this construction, we see with Lemma \ref{riowgrgergggergerg}
that we have a delooping of the stable $\infty$-category $\Ch^b(\bA)_\infty$,
of the form
$$\xymatrix{
\Ch^b(S^n(\bA))_\infty\ar[r]\ar[d]&\Ch^b(C(S^n(\bA)))_\infty\ar[d]\\
0\ar[r]&\Ch^b(S^{n+1}(\bA)_\infty{)}
}$$
This delooping exists at the level of Waldhausen categories
$$\xymatrix{
\Ch^b(S^n(\bA))\ar[r]\ar[d]&\Ch^b(C(S^n(\bA)))\ar[d]\\
0\ar[r]&\Ch^b(S^{n+1}(\bA){)}
}$$
By naturality of the comparison maps $K^{Wald}(\Ch^b(\bA))\to K^{Wald}(\Ch^b(\bA))$
(where the left hand denotes the usual Waldhausen construction associated
with the cofibration category $\Ch^b(\bA)$),
we obtain a natural morphism of spectra:
$$\colim_{n\geq 0}\Omega^n(K^{Wald}(\Ch^b(S^n(\bA)))\to
\colim_{n\geq 0}\Omega^n(K^{Wald}(\Ch^b(S^n(\bA)))_\infty)\, .$$
By virtue of Proposition \ref{prop:generic comparison non connective K-theory},
the codomain of this map is the non-connective spectrum $K(\Ch^b(\bA)_\infty)$.
The domain of this map is precisely Schlichting's definition \cite[Def. 8]{MR2206639}
of the non-connective
$K$-theory of $\bA$. Since both side agree canonically on connective covers,
to prove that this comparison map is a stable weak homotopy equivalence,
it is sufficient to prove that it induces an isomorphism on stable homotopy
groups in non positive degree. But this map induces an isomorphism
on $\pi_0$ for any $\bA$. Therefore, replacing $\bA$ by $S^n(\bA)$, for positive integers $n$,
we see that it induces an isomorphism in degree $-n$, because both sides
are canonically isomorphic to $$K_{-n}(\bA)=K_0(\Idem(S^n(\bA)))\cong K_0(\Idem(\Ch^b(S^n(\bA))))$$
(this is  proved in Theorem \cite[Theorem 8, p. 124]{MR2206639}
for the left-hand side, and follows from the last assertion of
Proposition \ref{prop:generic comparison non connective K-theory}
for the right-hand side).}
\end{proof}

Let $\bA$ be an additive category.

    \begin{prop}\label{eroigerggerger}
If $\bA$ is flasque, then $\UK(\bA)\simeq 0$.
\end{prop}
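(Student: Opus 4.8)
The plan is to reduce the statement to a known property of the universal localizing invariant $\cU_{loc}$, namely that it sends flasque stable $\infty$-categories to zero. First I would recall that $\UK(\bA)=\cU_{loc}(\Ch^b(\bA)_\infty)$ by Definition \ref{coiwehcoiwcwecw}. Since $\cU_{loc}$ is a localizing invariant, it inverts Morita equivalences and sends exact sequences to fibre sequences; in particular, if $\bB$ is a stable $\infty$-category admitting an exact endofunctor $S$ with $\id_\bB\oplus S\simeq S$ (i.e.\ $\bB$ is flasque in the sense of the Definition following \eqref{fewuhfuiefewfwf}), then for any localizing invariant $E$ one has $E(S)\simeq \id_{E(\bB)}+E(S)$ in the abelian group $[E(\bB),E(\bB)]$ (using additivity of $E$ on direct sums of functors), whence $\id_{E(\bB)}\simeq 0$ and $E(\bB)\simeq 0$. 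So it suffices to show that $\Ch^b(\bA)_\infty$ is a flasque stable $\infty$-category whenever $\bA$ is a flasque additive category.

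The key step is therefore: if $S:\bA\to\bA$ is an additive endofunctor with $\id_\bA\oplus S\simeq S$, then $\Ch^b(S)_\infty:\Ch^b(\bA)_\infty\to\Ch^b(\bA)_\infty$ witnesses flasqueness of $\Ch^b(\bA)_\infty$. The functor $\Ch^b(-)_\infty$ is functorial in $\bA$, so $S$ induces $\Ch^b(S)_\infty$, which is exact by the Proposition establishing the factorization through $\Cat^{ex}_\infty$. The natural isomorphism $\id_\bA\oplus S\simeq S$ of additive functors induces, by applying $\Ch^b(-)$ degreewise and then localizing, an equivalence of endofunctors $\id_{\Ch^b(\bA)_\infty}\oplus \Ch^b(S)_\infty\simeq \Ch^b(S)_\infty$. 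Here one uses that $\Ch^b(-)$ takes the direct sum of additive functors to the direct sum of the induced functors on chain complexes (direct sums of chain complexes are computed degreewise), and that $\ell$ preserves finite coproducts — which follows from Proposition \ref{fweopfwefewfwef}, since $\ell$ preserves pushouts along cofibrations and the zero object, hence finite coproducts. Thus $\Ch^b(\bA)_\infty$ is flasque.

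Concretely, I would write: by Definition \ref{coiwehcoiwcwecw} we have $\UK(\bA)\simeq\cU_{loc}(\Ch^b(\bA)_\infty)$. If $S:\bA\to\bA$ is an additive endofunctor with $\id_\bA\oplus S\simeq S$, then $T:=\Ch^b(S)_\infty$ is an exact endofunctor of $\Ch^b(\bA)_\infty$ with $\id_{\Ch^b(\bA)_\infty}\oplus T\simeq T$, so $\Ch^b(\bA)_\infty$ is a flasque stable $\infty$-category. Applying $\cU_{loc}$ and using that localizing invariants are additive (they preserve finite direct sums, being exact functors of stable $\infty$-categories, or directly by \cite{MR3070515}), we get $\cU_{loc}(T)\simeq \cU_{loc}(T)\oplus\id_{\cU_{loc}(\Ch^b(\bA)_\infty)}$, hence $\id_{\cU_{loc}(\Ch^b(\bA)_\infty)}\simeq 0$ and therefore $\UK(\bA)\simeq\cU_{loc}(\Ch^b(\bA)_\infty)\simeq 0$.

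The main obstacle is the bookkeeping in the second paragraph: one must be careful that the equivalence $\id\oplus S\simeq S$, which is a priori only an equivalence of functors $\bA\to\bA$ (or even just an isomorphism in the $1$-categorical sense), genuinely passes through the two-step construction $\Ch^b(-)$ then localize to yield an equivalence of $\infty$-endofunctors, and that the direct sum on the nose of $\infty$-functors induced by $\Ch^b(-)_\infty$ agrees with the direct sum inherited from the stable structure on $\Ch^b(\bA)_\infty$. The cleanest route is to invoke the $dg$-model of Remark \ref{foiuwoiefwefwefewfew}: $\Ch^b(-)_{dg}$ sends an additive functor to a $dg$-functor and a natural isomorphism to a $dg$-natural isomorphism, direct sums are strictly respected, and $\Nerve_{dg}$ is functorial; this makes the required equivalence of $\infty$-endofunctors immediate, after which the additivity argument for $\cU_{loc}$ finishes the proof.
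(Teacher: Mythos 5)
Your proposal is correct, but it takes a genuinely different route from the paper. The paper never passes flasqueness through $\Ch^{b}(-)_{\infty}$: instead it runs the Eilenberg swindle directly at the level of additive categories, first proving by hand the additivity statement $\UK(f)+\UK(g)\simeq\UK(h)$ (for $h$ the objectwise sum of $f,g:\bA\to\bB$) via the auxiliary category $\bD(\bB)$ of split decompositions (which is just $\bB\times\bB$ in disguise) together with the Karoubi-filtration fibre sequence of Proposition \ref{roijoregergegr}, and then applies this with $f=\id_{\bA}$, $g=S$, $h\simeq S$. You instead show that $\Ch^{b}(\bA)_{\infty}$ is a flasque stable $\infty$-category and invoke the fact that localizing invariants kill such categories. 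Both work; what each buys is this: the paper's argument is self-contained, using only results already established in the text, and avoids the coherence bookkeeping needed to upgrade $\id_{\bA}\oplus S\simeq S$ to an equivalence $\id_{\Ch^{b}(\bA)_{\infty}}\oplus\Ch^{b}(S)_{\infty}\simeq\Ch^{b}(S)_{\infty}$ (which you handle sensibly via Proposition \ref{fweopfwefewfwef} or the $dg$-model of Remark \ref{foiuwoiefwefwefewfew}); your argument is more modular and proves the stronger statement that $E(\Ch^{b}(\bA)_{\infty})\simeq 0$ for every localizing (indeed additive) invariant $E$, but it imports from \cite{MR3070515} the additivity theorem that such invariants send objectwise direct sums of exact functors to sums of maps. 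On that last point, your parenthetical justification ``they preserve finite direct sums, being exact functors of stable $\infty$-categories'' is not quite the right reason: the relevant input is that $\cU_{loc}$ sends the split exact sequence $\bB\to\bB\times\bB\to\bB$ to a split fibre sequence, from which additivity on sums of functors follows by exactly the argument the paper carries out with $\bD(\bB)$; since that statement is available in \cite{MR3070515}, this is a citation issue rather than a gap, and your overall proof stands.
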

\begin{proof}
 
 Let $\bB$ be an additive category.  We  define a new additive category
$\bD(\bB)$ by:
\begin{enumerate}
\item An object of $\bB$ is a triple $(B,B_{0},B_{1})$ of an object $B$ of $\bB$ with two subobjects such that the induced morphism $B_{0}\oplus B_{1}\to B$ is an isomorphism.
\item A morphism $(B,B_{0},B_{1}) \to (B^{\prime},B^{\prime}_{0},B^{\prime}_{1})$ is  a morphism  $B\to B^{\prime}$ preserving the subobjects.
\end{enumerate} 

We have a sequence
$$\bB\stackrel{i}{\to} \bD(\bB)\stackrel{p}{\to} \bB$$
where the first morphism $i$ sends the object $B$ in $\bB$ to the  object $(B,B,0)$ of $\bD(\bB)$, and the second morphism $p$ sends the object $(B,B_{0},B_{1})$ of $\bD(\bB)$ to  the object $B_{1}$ of $\bB$. 
The   morphism $i$ is fully faithful and determines a 
Karoubi filtration on $\bD(\bB)$, and the morphism $p$ is the projection onto the quotient.  
It has a left-inverse $j:\bB\to \bD(\bB)$   given by $B\mapsto (B,0,B)$.

We finally  have a functor $s:\bD(\bB)\to \bB$ which sends $(B,B_{0},B_{1})$ to $B$.

Let $f,g:\bA\to \bB$ be two  morphisms between additive categories.
Then  we can define a morphism $(f,g):\bA\to \bD(\bB)$ by
$A\mapsto (f(A)\oplus g(A),f(A),g(A))$ which is uniquely determined by the choice of representatives of the sums. We now consider the composition
$$h:\bA\stackrel{(f,g)}{\to} \bD(\bB)\stackrel{s}{\to} \bB\ .$$  
 Note that $\cM_{loc}$ is stable so that it makes sense to add morphisms.
We have $$\UK(f),\UK(g),\UK(h)\in \pi_{0}(\Map_{\cM_{loc}}(\UK(\bA),\UK(\bB)))\ .$$ \begin{lem}
We have $\UK(h)\simeq \UK(f)+\UK(g)$.
\end{lem}
\begin{proof}
We have a fibre sequence
$\UK(\bB)\stackrel{\UK(i)}{\to} \UK(\bD(\bB))\stackrel{\UK(p)}{\to} \UK(\bB)$ which is split by
$\UK(j)$.
We have a {commutative} diagram
{$$\xymatrix{\UK(\bA)\ar@/^1.7cm/[rrrrrr]^{\UK(f)+\UK(g)}\ar[rr]^(.42){\UK(f)\oplus \UK(g)}\ar@{=}[d]&&\UK(\bB)\oplus \UK(\bB)\ar[rrr]^(.53){\UK(i)\circ \pr_{0}+ \UK(j)\circ \pr_{1}}_{\simeq}\ar@{-->}[d]&&&\UK(\bD(\bB))\ar@{=}[d]\ar[r]^{\UK(s)}&\UK(\bB)\ar@{=}[d]\\\UK(\bA)\ar@/_{1.7cm}/[rrrrrr]^{\UK(h)}\ar[rr]^{\UK(f,g)}&&\UK(\bB\oplus \bB)\ar[rrr]^{\UK(i,j)}&&&\UK(\bD(\bB))\ar[r]^{\UK(s)}&\UK(\bB)}\ .$$}The dashed arrow is induced by the two embeddings $B\to B\oplus B$ and the universal property of the sum. \end{proof}

Let $S:\bA\to \bA$ be the endo-functor such that $\id_{\bA}\oplus \bS\simeq \bS$.
We get the equivalence  $\UK(\id_{\bA})+\UK(\bS)\simeq \UK(\bS)$ and this implies $\UK(\id_{\bA})\simeq 0$, hence
$\UK(\bA)\simeq 0$. \end{proof}

\begin{prop}\label{rgoigjoergergrgg}
The functor $\UK$ preserves filtered colimits. 
 \end{prop}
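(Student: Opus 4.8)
The plan is to reduce the statement to the assertion that $\Ch^{b}(-)_{\infty}$ preserves filtered colimits. Indeed, $\cU_{loc}$ preserves filtered colimits by construction --- this is part of being a localizing invariant, as recalled above --- so it is enough to show that $\bA\mapsto\Ch^{b}(\bA)_{\infty}$ preserves filtered colimits into $\Cat^{ex}_{\infty}$; and since the forgetful functor $\Cat^{ex}_{\infty}\to\Cat_{\infty}$ is well known to preserve filtered colimits (a filtered colimit of stable $\infty$-categories along exact functors, formed in $\Cat_{\infty}$, is again stable, with exact structure maps), it in fact suffices to check preservation of filtered colimits into $\Cat_{\infty}$.

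For this I would use the factorization of $\Ch^{b}(-)_{\infty}$ through \eqref{ecfiuwhiuwefwfwe} and \eqref{fiuzhriuferferferf}. Filtered colimits in $\Add$ being computed on underlying categories, the functor $(\Ch^{b}(-),W_{h})\colon\Add\to\RelCat$ preserves filtered colimits by a finiteness argument: a bounded chain complex over $\colim_{i}\bA_{i}$, as well as a chain map or a chain homotopy between two such, consists of finitely many objects and finitely many morphisms of the $\bA_{i}$ subject to finitely many relations, hence is already defined at some stage; thus both the underlying category of $\Ch^{b}(\colim_{i}\bA_{i})$ and its class $W_{h}$ of homotopy equivalences are the filtered colimits of those of the $\Ch^{b}(\bA_{i})$. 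It then remains to see that the localization functor $\cL\colon\RelCat\to\Cat_{\infty}$ preserves filtered colimits. Here I would write $\cL$ as the composite of the nerve functor $(\bC,W)\mapsto(\mathrm{N}(\bC),\mathrm{N}(W))$ into marked simplicial sets --- which preserves filtered colimits since the ordinary nerve does --- with the $\infty$-categorical localization functor $(\cC,W)\mapsto\cC[W^{-1}]$, which preserves all colimits, being left adjoint to the functor $\cD\mapsto(\cD,\cD^{\simeq})$ marking the equivalences. Alternatively, one can run the same compactness argument through the $dg$-nerve model $\Ch^{b}(\bA)_{\infty}\simeq\Nerve_{dg}(\Ch^{b}(\bA)_{dg})$ of Remark~\ref{foiuwoiefwefwefewfew}: the assignment $\bA\mapsto\Ch^{b}(\bA)_{dg}$ preserves filtered colimits since every mapping complex of $\Ch^{b}(\bA)_{dg}$ is a bounded complex each term of which is a finite product of Hom groups of $\bA$, and the $dg$-nerve preserves filtered colimits since every one of its simplices is a finite bundle of data (finitely many objects, finitely many elements of mapping complexes in bounded degrees, finitely many equations).

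Assembling these observations gives $\Ch^{b}(\colim_{i}\bA_{i})_{\infty}\simeq\colim_{i}\Ch^{b}(\bA_{i})_{\infty}$, and applying $\cU_{loc}$ yields the proposition. The point needing genuine care is the claim that $\cL$ (equivalently, the $dg$-nerve) commutes with filtered colimits: one must make sure that the filtered colimit of the relative categories $(\Ch^{b}(\bA_{i}),W_{h})$ used above is the naive one, with no hidden completion --- neither an idempotent completion of the underlying category nor a saturation of the weak equivalences --- taking place; once that is clear, the rest is a routine finiteness verification. One should also confirm that the colimit so produced in $\Cat_{\infty}$ is a stable $\infty$-category with exact transition functors, which follows from Proposition~\ref{fweopfwefewfwef}, so that it indeed represents the colimit in $\Cat^{ex}_{\infty}$.
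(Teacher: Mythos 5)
Your argument is correct and follows the same overall reduction as the paper: both factor $\UK$ as $\cU_{loc}\circ(\text{inclusion }\Cat^{ex}_{\infty}\subseteq\Cat_{\infty})\circ\cL\circ(\Ch^{b}(-),W_{h})$, observe that $\cU_{loc}$ preserves filtered colimits by definition of a localizing invariant, that filtered colimits in $\Cat^{ex}_{\infty}$ are computed in $\Cat_{\infty}$, and that $(\Ch^{b}(-),W_{h})$ commutes with filtered colimits by the evident finiteness of bounded complexes, chain maps and homotopies. The only genuine divergence is in the justification of the key sub-step that the localization functor $\cL:\RelCat\to\Cat_{\infty}$ preserves filtered colimits: the paper argues through the hammock-localization model of Remark \ref{oijoifwefewfwefwf} (hammock localization, Kan complexes, and the coherent nerve all commute with filtered colimits, and filtered colimits of quasi-categories compute colimits in $\Cat_{\infty}$), whereas you argue either that localization is left adjoint to the minimal-marking functor $\cD\mapsto(\cD,\cD^{\simeq})$ and hence preserves all colimits, or via the $dg$-nerve model of Remark \ref{foiuwoiefwefwefewfew} and compactness of its simplices. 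Both routes are sound; your adjunction argument is cleaner and more general (it gives preservation of arbitrary colimits of marked simplicial sets), at the price of the point you yourself flag, namely identifying the naive (unsaturated, uncompleted) filtered colimit of the relative categories $(\Ch^{b}(\bA_{i}),W_{h})$ with the colimit in the $\infty$-category of marked simplicial sets, which is where the paper's hands-on model computation does its work. Your closing remark that the resulting colimit in $\Cat_{\infty}$ is stable with exact transition functors (via Proposition \ref{fweopfwefewfwef}) is exactly the content of the paper's assertion that the inclusion $\Cat^{ex}_{\infty}\to\Cat_{\infty}$ preserves filtered colimits, so nothing is missing.
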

\begin{proof} The functor \eqref{ecfiuwhiuwefwfwe} clearly 
   preserves filtered colimits. 
The localization functor
\eqref{fiuzhriuferferferf} preserves filtered colimits.  
In order to see this we could use the model given in Remark \ref{oijoifwefewfwefwf} and observe that
{ha}mmock localization preserves filtered colimits, a filtered colimit  of Kan-complexes is again a Kan complex, and   the coherent nerve functor preserves filtered colimits, and finally that a filtered colimit of quasi-categories is a filtered colimit in the $\infty$-category $\Cat_{\infty}$.   
The inclusion $\Cat^{ex}_{\infty}\to \Cat_{\infty}$ {preserves} filtered colimits.  And
 finally, by definition the universal localizing $\cU_{\infty}$ invariant preserves filtered colimits.  \end{proof}

\section{A universal equivariant coarse homology theory}\label{foifioufouewfwefwefw}

%
%
%
%

\subsection{Coarse homology theories}

Following   \cite{Bunke:2016uq} a bornological coarse space is a triple $(X,\cC,\cB)$ of a set $X$ with a coarse structure $\cC$ and a bornology $\cB$ which are compatible in the sense that
every coarse thickening of a bounded set is again bounded. A morphism $f:(X,\cC,\cB)\to (X^{\prime},\cC^{\prime},\cB^{\prime})$ between bornological coarse spaces is a controlled and proper map. 
We thus have a category $\BC$ of bornological coarse spaces and morphisms.

For a group $G$ a $G$-bornological coarse space is a bornological coarse space $(X,\cC,\cB)$ with an action of $G$ by automorphisms such that its set of $G$-invariant entourages $\cC^{G}$ is cofinal in $\cC$. A morphism between $G$-bornological coarse spaces is an equivariant morphism of bornological coarse spaces. In this way we have defined a full subcategory $$G\BC\subseteq \Fun(BG,\BC)$$
of $G$-bornological coarse spaces and equivariant morphisms.

The category $G\BC$ has a symmetric monoidal structure which we denote by $\otimes$.
It is defined by
$$(X,\cC,\cB)\otimes (X^{\prime},\cC^{\prime},\cB^{\prime}):=(X\times X^{\prime},\cC\langle \cC\times \cC^{\prime}\rangle,\cB\langle \cB\times \cB^{\prime}\rangle)\ ,$$ where $G$ acts diagonally on the product set,
 $\cC\langle \cC\times \cC^{\prime}\rangle$ denotes the coarse structure generated by all products $U\times U^{\prime}$ of entourages of $X$ and $X^{\prime}$, and $ \cB\langle\cB\times \cB^{\prime}\rangle$ denotes the  bornology generated by all products $B\times B^{\prime}$ of bounded subsets of $X$ and $X^{\prime}$.

In   \cite{Bunke:2016uq} we axiomatized the notion of a coarse homology theory. The equivariant case will be studied throughly in \cite{bekw}. In the following   definition and the text below we describe the axioms.

Let $\cC$ be a stable cocomplete $\infty$-category and consider a functor
$$E:\BC\to \cC\ .$$

\begin{ddd}\label{foifjiofwefewwf}
$E$ is an equivariant  $\cC$-valued coarse homology theory if it satisfies:
\begin{enumerate}
\item coarse invariance.
\item excision.
\item vanishing on flasques.
\item $u$-continuity.
\end{enumerate}
 \end{ddd}

The detailed description of these properties is as  follows:
\begin{enumerate}
\item  (coarse invariance) We require that for every $G$-bornological coarse space $X$ the projection
$\{0,1\}_{max,max}\otimes X\to X$ induces an equivalence $$E(\{0,1\}_{max,max}\otimes X)\to E(X)\ .$$ Here for a set $S$ we let $S_{max,max}$ denote the set $S$ equipped with the maximal coarse and bornological structures.
\item (excision) A big family in a $G$-bornological coarse spaces  $X$ is a filtered family $(Y_{i})_{i\in I}$  of invariant subsets of $X$ such that for every $i$ in $I$ and entourage $U$ of $X$ there exists $j$ in $I$ such that $U[Y_{i}]\subseteq Y_{j}$.
We define $E(\cY):=\colim_{i\in I} E(Y_{i})$.  We furthermore set  $$E(X,\cY):=\Cofib(E(\cY)\to E(X))\ .$$
A complementary pair $(Z,\cY)$ consists of an invariant subset $Z$ of $X$ and a big family such that there exists $i$ in $I$ such that $Z\cup Y_{i}=X$. Note that $Z\cap \cY:=(Z\cap Y_{i})_{i\in I}$ is a big family in $Z$. 
Excision then requires that the canonical morphism
$$E(Z,Z\cap\cY)\to E(X,\cY)$$ is an equivalence. Here all subsets of $X$ are equipped with the induced bornological coarse structure.
\item A $G$-bornological coarse space $X$ is flasque if it admits an equivariant selfmap $f:X\to X$ (we say that $f$ implements flasquenss) such that $(f\times \id)(\diag_{X})$ is an entourage of $X$, for every entourage $U$ of $X$, the {subset}  $\bigcup_{n\in \nat} (f^{n}\times f^{n})(U)$ {of $X\times X$} is again an entourage
 of $X$, and for every bounded subset $B$ of $X$ there exists an $n$ in $\nat$ such that $f^{n}(X)\cap \Gamma B=\emptyset$. Vanishing on flasques requires that $$E(X)\simeq 0$$ if $X$ is flasque.
\item For every invariant entourage $U$ of $X$ we define a new bornological coarse space $X_{U}$. It has the underlying bornological space of $X$ and the coarse structure $\cC\langle U\rangle$. The condition of  $u$-continuity requires that the natural morphisms $X_{U}\to X$ induce an equivalence $$\colim_{U\in \cC^{G}}E(X_{U})\stackrel{\simeq}{\to}  E(X)\ .$$
 \end{enumerate}

\subsection{$X$-controlled $\bA$-objects}\label{reoifowefwefewfewf}

In this section we associate in a functorial way  to every additive category and bornological coarse  space
a new additive category of controlled objects and morphisms.
This construction is taken from \cite{bekw}.

\begin{ddd}
 A subset of a bornological coarse space is called locally finite if its intersection with every bounded subset is finite. \end{ddd}
 
Let $\bA$ be an additive category and $\hat \bA$ denote its $\Ind$-completion. We consider $\bA$ as a full subcategory of $\hat \bA$.

Let $X$ be a $G$-bornological coarse space.

\begin{ddd}\label{wefoiewfw}
An equivariant $X$-controlled $\bA$-object is a triple $(M,\phi,\rho)$, where
\begin{enumerate}
\item $M$ is an object of $\hat \bA$,
\item $\phi$ is a  measure on $X$ with values in the idempotents on $M$ admitting images,
\item $\rho$ is an action of $G$ on $M$,
\end{enumerate}
such that{:}
\begin{enumerate}
\item For every bounded subset $B$ of $X$ we  have $M(B)\in \bA$.
\item For every subset $Y$ of $X$ and element $g$ of $G$ we have
\begin{equation}\label{gpogjkpoegerg}
\phi(gY)=\rho(g)\phi(Y)\rho(g)^{-1} \ .
\end{equation}
\item  The natural map  \begin{equation}\label{rioiuiorugoieggergergerg}\bigoplus_{x\in X}M(\{x\})\to M \end{equation} is an isomorphism.
\item The subset $\supp(M,\phi,\rho):=\{x\in X\:|\: M(\{x\})\not=0\}$ of $X$ is locally finite.
\end{enumerate}
\end{ddd}

Here we write $M(Y)$ for the choice of an image of $\phi(Y)$.

\begin{rem}
The measure $\phi$ appearing in Definition \ref{wefoiewfw} is defined on all
subsets of $X$ and {is} finitely additive. For every two subsets $Y$ and $Z$ it satisfies $\phi(Y)\phi(Z)=\phi(Y\cap Z)$. In particular, we have $\phi(Y)\phi(Z)=\phi(Z)\phi(Y)$.   \end{rem}

Let $(M,\phi,\rho)$ and $(M^{\prime},\phi^{\prime},\rho^{\prime})$ be two equivariant $X$-controlled $\bA$-objects and $A:M\to M^{\prime}$ be a morphism in $\hat \bA$. Using the isomorphisms  \eqref{rioiuiorugoieggergergerg} for every pair of points $(x,y)$ in $X$ we obtain a morphism
$A_{x,y}:M(\{y\})\to  M^{\prime}(\{x\})$ between objects of $\bA$.
We say that $A$ is controlled if the set
$\supp(A):=\{(x,y)\in X\times X\:|\: A_{x,y}\not=0 \}$ is an entourage of $X$.

\begin{ddd}
A morphism $A:(M,\phi,\rho)\to (M^{\prime},\phi^{\prime},\rho^{\prime})$ between two equivariant  $X$-controlled $\bA$-objects is a morphism $A:M\to  M^{\prime}$ in $\hat \bA$ which intertwines the $G$-actions and is controlled.
\end{ddd}

We let $\bV_{\bA}^{G}(X)$ denote the category of equivariant $X$-controlled $\bA$-objects and morphisms. It is again an additive category. 
If $f:X\to X^{\prime}$ is an equivariant morphism between $G$-bornological coarse spaces, then we define a morphism between  additive categories
$$\bV_{\bA}^{G}(f):\bV_{\bA}^{G}(X)\to \bV_{\bA}^{G}(X^{\prime})\ , \quad (M,\phi,\rho)\mapsto f_{*}(M,\phi,\rho):=(M,\phi\circ f^{-1},\rho)\ , \quad A\mapsto f_{*}(A):=A\ .$$
In the way we have defined a functor
$$\bV_{\bA}^{G}:G\BC\to \Add\ .$$
If $G$ is the trivial group, then we omit $G$ from the notation.

\subsection{The universal equivariant coarse homology theory}

Let $\bA$ be an additive category.  \begin{ddd}\label{fiopfewfefwefwefff}
We define the functor
$$\UK\cX^{G}_{\bA}:=\UK\circ \bV^{G}_{\bA}:G\BC\to \cM_{loc}\ .$$
\end{ddd}

\begin{theorem}\label{wiwofewfewfewfewfw}
The functor $\UK\cX_{\bA}^{G}$ is an equivariant  coarse homology theory.
\end{theorem}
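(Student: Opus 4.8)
The plan is to verify the four axioms of Definition~\ref{foifjiofwefewwf} for $\UK\cX^{G}_{\bA} = \UK \circ \bV^{G}_{\bA}$, in each case by reducing to a statement about the functor $\bV^{G}_{\bA}$ at the level of additive categories together with the already-established properties of $\UK$ (Lemma~\ref{roirujoigegergergg}, Proposition~\ref{roijoregergegr}, Proposition~\ref{eroigerggerger}, Proposition~\ref{rgoigjoergergrgg}). Since the very same axioms were checked for $\mathrm{K}\cX^{G}_{\bA} = K \circ \bV^{G}_{\bA}$ in \cite{bekw}, and the non-connective $K$-theory functor $K$ enjoys exactly the same formal properties (sends Morita equivalences to equivalences, sends exact sequences to fibre sequences, preserves filtered colimits, vanishes on flasques), the strategy is to transport those arguments essentially verbatim, replacing $K$ by $\UK$ throughout.

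In detail: for \emph{coarse invariance}, one shows that the projection $p\colon \{0,1\}_{max,max}\otimes X\to X$ induces a functor $\bV^{G}_{\bA}(p)$ between additive categories which is, if not an equivalence, at least a Morita equivalence (or more precisely admits a left/right inverse up to natural isomorphism after passing to controlled objects), and then invokes Lemma~\ref{roirujoigegergergg}. For \emph{$u$-continuity}, one observes that $\bV^{G}_{\bA}(X) = \colim_{U\in\cC^{G}}\bV^{G}_{\bA}(X_{U})$ as additive categories — this is immediate from the definition, since the support of a controlled morphism is an entourage and entourages are unions over $\cC^G$ — and then applies Proposition~\ref{rgoigjoergergrgg} (filtered colimit preservation of $\UK$), noting that a filtered colimit of additive categories is computed as expected. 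For \emph{vanishing on flasques}, a flasquenss-implementing self-map $f\colon X\to X$ yields, by the standard Eilenberg swindle construction on controlled objects (infinite direct sums along the orbits of $f$, which exist because the support condition is preserved), an endofunctor $S$ of $\bV^{G}_{\bA}(X)$ with $\id\oplus S\simeq S$, i.e.\ $\bV^{G}_{\bA}(X)$ is flasque as an additive category; then Proposition~\ref{eroigerggerger} gives $\UK\cX^{G}_{\bA}(X)\simeq 0$.

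The main work, as in \cite{bekw}, is \emph{excision}. The key point is that for a complementary pair $(Z,\cY)$ one identifies the inclusion $\bV^{G}_{\bA}(Z)\hookrightarrow\bV^{G}_{\bA}(X)$ (or rather a suitable subcategory of objects supported near $Z$) as a Karoubi filtration in the sense of Definition~\ref{wefifjweifoefwefwff}, with quotient equivalent (after passing to big-family colimits) to the relevant relative category, so that Proposition~\ref{roijoregergegr} — which says $\UK$ turns a Karoubi filtration $\bC\subset\bA$ into a fibre sequence $\UK(\bC)\to\UK(\bA)\to\UK(\bA/\bC)$ — produces the excision fibre sequence after taking the colimit over the big family and using that $\UK$ preserves filtered colimits. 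Concretely one shows that an object of $\bV^{G}_{\bA}(X)$ supported in $Z\cup Y_i$ decomposes, up to controlled isomorphism and modulo the completely continuous morphisms coming from the bounded overlap $Z\cap Y_i$, as a sum of a $Z$-part and a $Y_i$-part; the bookkeeping needed to verify the extension condition in Definition~\ref{wefifjweifoefwefwff} is the delicate part, and it is precisely here that one cites or reproduces the corresponding lemmas of \cite{bekw}. Since all of this is purely at the level of additive categories and is independent of which localizing invariant one post-composes with, it applies unchanged; the only new ingredient beyond \cite{bekw} is that the three displayed formal properties of $\UK$ replace those of $K$, and those are supplied by Lemmas and Propositions already proved in this section.
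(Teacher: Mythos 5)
Your proposal takes essentially the same route as the paper: each axiom of Definition \ref{foifjiofwefewwf} is reduced to a statement about $\bV^{G}_{\bA}$ at the level of additive categories (explicit natural isomorphism for coarse invariance, the Eilenberg swindle along $f$ for flasqueness, $\bV^{G}_{\bA}(X)\cong\colim_{U\in\cC^{G}}\bV^{G}_{\bA}(X_{U})$ for $u$-continuity, Karoubi filtrations for excision) and then Lemma \ref{roirujoigegergergg}, Proposition \ref{eroigerggerger}, Proposition \ref{rgoigjoergergrgg} and Proposition \ref{roijoregergegr} are applied, with the finer bookkeeping deferred to \cite{bekw} exactly as the paper does. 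One small correction to your excision paragraph: the Karoubi filtration is $\bV^{G}_{\bA}(\cY)=\colim_{i\in I}\bV^{G}_{\bA}(Y_{i})\subseteq\bV^{G}_{\bA}(X)$ (and $\bV^{G}_{\bA}(Z\cap\cY)\subseteq\bV^{G}_{\bA}(Z)$), not the inclusion $\bV^{G}_{\bA}(Z)\hookrightarrow\bV^{G}_{\bA}(X)$; the rest of your description --- the fibre sequence from Proposition \ref{roijoregergegr}, preservation of filtered colimits, and the identification of the two quotient categories via $Z\to X$ --- is exactly the paper's argument.
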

\begin{proof}
In the following four lemmas we verify the conditions listed in Definition \ref{foifjiofwefewwf}.
We keep the arguments sketchy as more details are given in \cite{bekw}.
\begin{lem}
$\UK\cX_{\bA}^{G}$ is coarsely invariant.
\end{lem}
\begin{proof}
If $X$ is a $G$-bornological coarse space, then the projection
$p:\{0,1\}_{max,max}\otimes X\to X$ induces an equivalence of additive categories
$\bV^{G}_{\bA}(\{0,1\}_{max,max}\otimes X)\to \bV^{G}_{\bA}(X)$. Using this fact the Lemma follows from Lemma \ref{roirujoigegergergg}. 

Let $i:X\to \{0,1\}_{max,max}\otimes X$ denote the inclusion given by the element $0$ of $\{0,1\}$. Then $p\circ i=\id_{X}$.  We now construct an isomorphism $u: \bV_{\bA}^{G}(i)\circ \bV_{\bA}^{G}(p)\to \id$. On the object
$(M,\phi,\rho)$ of $\bV_{\bA}^{G}(\{0,1\}_{max,max}\otimes X)$ it is given by $u_{(M,\phi,\rho)}:=\id_{M}:M\to M$.
This map defines an isomorphism of equivariant $\{0,1\}_{max,max}\otimes X$-controlled 
$\bA$-objects
$(M,\phi\circ p^{-1}\circ i^{-1},\rho)$ and $(M,\phi,\rho)$. 
One easily checks the conditions for a natural transformation.
\end{proof}

\begin{lem}
$\UK\cX_{\bA}^{G}$ is excisive.
\end{lem}
\begin{proof}

Let $X$ be a $G$-bornological coarse space and
  $\cY:=(Y_{i})_{i\in I}$ be a big family on $X$. Then
$$\bV^{G}_{\bA}(\cY):=\colim_{i\in I} \bV_{\bA}^{G}(Y_{i})$$ is a Karoubi-filtration in $\bV^{G}_{\bA}(X)$. Since $\UK$ commutes with filtered colimits we have an equivalence
$$\UK\cX^{G}_{\bA}(\cY)\simeq \UK(\bV^{G}_{\bA}(\cY))\ .$$ By Proposition \ref{roijoregergegr}
we get a fibre sequence $$\UK\cX^{G}_{\bA}(\cY)\to \UK\cX^{G}_{\bA}(X)\to \UK(\bV^{G}_{\bA}(X)/\bV^{G}_{\bA}(\cY))\ .$$
We conclude that \begin{equation}\label{regoijoigergegr}
\UK\cX^{G}_{\bA}(X,\cY)\simeq \UK(\bV^{G}_{\bA}(X)/\bV^{G}_{\bA}(\cY))\ .\end{equation}
Let now $(Z,\cY)$ be a   complementary pair. Then we get a similar sequence
$$\UK\cX^{G}_{\bA}(Z\cap\cY)\to \UK\cX^{G}_{\bA}(Z)\to \UK(\bV^{G}_{\bA}(Z)/\bV^{G}_{\bA}(Z\cap \cY))\ .$$  
One now checks that there is a canonical equivalence of additive categories
$$\psi:\bV^{G}_{\bA}(Z)/\bV^{G}_{\bA}(Z\cap \cY)\to \bV^{G}_{\bA}(X)/\bV^{G}_{\bA}(\cY)$$
induced by the inclusion $i:Z\to X$.
We define an inverse $$\phi:\bV^{G}_{\bA}(X)/\bV^{G}_{\bA}(\cY)\to \bV^{G}_{\bA}(Z)/\bV^{G}_{\bA}(Z\cap \cY)$$ as follows:
\begin{enumerate}
\item On objects $\phi$ sends $(M,\phi,\rho)$ to $(M(Z),\phi(Z\cap-),\rho_{|M(Z)})$.
\item On morphisms $\phi$ sends the class $[A]$ of $A:(M,\phi,\rho)\to (M^{\prime},\phi^{\prime},\rho^{\prime})$ to the class of $\phi^{\prime}(Z)A\phi(Z):M(Z)\to M^{\prime}(Z)$.
\end{enumerate}
One now observes that there is a canonical isomorphism
$$\phi\circ \psi\to \id_{\bV^{G}_{\bA}(Z)/\bV^{G}_{\bA}(Z\cap \cY)}\ .$$
Furthermore the inclusions
$M(Z)\to M$ induce an isomorphism $$\psi\circ \phi\to  \id_{ \bV^{G}_{\bA}(X)/\bV^{G}_{\bA}(\cY)}\ .$$ Using the equivalence \eqref{regoijoigergegr} twice we see that the natural morphism 
$$\UK\cX^{G}_{\bA}(Z,Z\cap \cY)\to \UK\cX^{G}_{\bA}(X,\cY)$$
is an equivalence. \end{proof}

\begin{lem}
$\UK\cX_{\bA}^{G}$  vanishes on flasques.
\end{lem}
\begin{proof}
Assume that $X$ is a flasque $G$-bornological coarse space. We claim that then $\bV_{\bA}^{G}(X)$ is a flasque additive category so that the Lemma follows from Proposition \ref{eroigerggerger}.  

Let $f:X\to X$ be the selfmap  implementing flasqueness. We can then define an exact functor
$$S:\bV_{\bA}^{G}(X)\to \bV_{\bA}^{G}(X)$$ by
$$S(M,\phi,\rho):=\Big(\bigoplus_{n\in \nat} M,\oplus_{n\in \nat} \phi\circ (f^{n})^{-1}, \oplus_{n\in \nat} \rho\Big)\ .$$
One checks that $S$ is well-defined and that 
$$\bV_{\bA}^{G}(f)\circ S\simeq \id_{\bV_{\bA}^{G}(X)}\oplus S\ .$$  There is an isomorphism $v:\bV_{\bA}^{G}(f)\to \id_{\bV_{\bA}^{G}(X)}$ which is given on the object 
$(M,\phi,\rho)$ by $$v_{(M,\phi,\rho)}:=\id_{M}:(M,\phi\circ f^{-1},\rho)\to (M,\phi,\rho)\ .$$
Consequently,
$$   S \simeq   \id_{\bV_{\bA}^{G}(X)}\oplus S $$ as required.
\end{proof}

  \begin{lem}
  $\UK\cX_{\bA}^{G}$ is $u$-continuous.
  \end{lem}
\begin{proof}
Let $X$ be a $G$-bornological coarse space. Then by an inspection of the definitions we have an isomorphism of additive categories
  $$\bV^{G}_{\bA}(X)\cong \colim_{U\in \cC^{G}} \bV^{G}_{\bA}(X_{U})\ .$$
  The lemma now follows from Proposition \ref{rgoigjoergergrgg}.
 \end{proof}
 \end{proof}

\subsection{Continuity}

In the section we verify an additional continuity property of the coarse homology $\UK\cX_{\bA}^{G}$. The property plays an important role in the comparison of assembly and forget control maps discussed in \cite{bekw}.

Let $X$ be a $G$-bornological coarse space.
  \begin{ddd}
A filtered family of invariant  subsets $(Y_{i})_{i\in I}$ of $X$ is called trapping if every locally finite subset  of $X$ is contained in $Y_{i}$  some index $i$ in $I$.\end{ddd}

\begin{ddd}
An equivariant  $\cC$-valued coarse homology theory $E$ is continuous if for every trapping exhaustion
$(Y_{i})_{i\in I}$ of a $G$-bornological coarse space $X$ the natural morphism
$ \colim_{i\in I}E(Y_{i})\to  E(X)$ is an equivalence.
\end{ddd}

\begin{prop}
 The coarse homology theory $\UK\cX^{G}_{\bA}$ is continuous.
\end{prop}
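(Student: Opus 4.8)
The plan is to mimic the proof of excision (Lemma just above), replacing the big family $\cY$ by a trapping exhaustion $(Y_i)_{i\in I}$. First I would observe that, since every $Y_i$ is an invariant subset of $X$ with the induced bornological coarse structure, the maps $Y_i\to X$ induce morphisms of additive categories $\bV^G_\bA(Y_i)\to \bV^G_\bA(X)$, and the colimit category $\bV^G_\bA(\cY):=\colim_{i\in I}\bV^G_\bA(Y_i)$ is a (full) additive subcategory of $\bV^G_\bA(X)$. I expect that the key point is that this inclusion is in fact an equivalence of additive categories: an equivariant $X$-controlled $\bA$-object $(M,\phi,\rho)$ has locally finite support by Definition \ref{wefoiewfw}(4), hence by the trapping property $\supp(M,\phi,\rho)\subseteq Y_i$ for some $i$, so $(M,\phi,\rho)$ is isomorphic to an object in the image of $\bV^G_\bA(Y_i)$; and any morphism between two such objects, having entourage support, still lands in $\bV^G_\bA(Y_j)$ for a large enough $j$ (using that one may enlarge $i$ and that a finite union of the relevant supports is again locally finite). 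This yields an isomorphism of additive categories $\colim_{i\in I}\bV^G_\bA(Y_i)\xrightarrow{\;\cong\;}\bV^G_\bA(X)$.

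Granting this, the proof concludes by applying $\UK$ and using Proposition \ref{rgoigjoergergrgg}, which says $\UK$ preserves filtered colimits. Concretely,
$$\colim_{i\in I}\UK\cX^G_\bA(Y_i)=\colim_{i\in I}\UK(\bV^G_\bA(Y_i))\simeq \UK\big(\colim_{i\in I}\bV^G_\bA(Y_i)\big)\simeq \UK(\bV^G_\bA(X))=\UK\cX^G_\bA(X)\ ,$$
and one checks that this equivalence is the one induced by the canonical maps $Y_i\to X$, which is exactly the assertion that $\UK\cX^G_\bA$ is continuous.

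The main obstacle, and the only place where the trapping hypothesis is used, is the claim that $\colim_{i\in I}\bV^G_\bA(Y_i)\to\bV^G_\bA(X)$ is essentially surjective and full. Essential surjectivity is immediate from local finiteness of supports plus the trapping property. Fullness requires a small argument: given $(M,\phi,\rho)$ over $Y_i$ and $(M',\phi',\rho')$ over $Y_j$ and a controlled equivariant morphism $A$ between their images in $\bV^G_\bA(X)$, its support $\supp(A)$ is an entourage, but one needs that $A$ already defines a morphism inside some $\bV^G_\bA(Y_k)$; since both objects are supported on the locally finite set $\supp(M,\phi,\rho)\cup\supp(M',\phi',\rho')$, and this is again contained in some $Y_k$ with $k\ge i,j$, the morphism $A$ is visibly a morphism of $Y_k$-controlled objects. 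Faithfulness is automatic since all the functors in the diagram are identity on morphism sets. This is essentially the same bookkeeping as in \cite{bekw}, so I would keep it brief and refer there for the routine verifications.
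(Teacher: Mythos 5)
Your proposal is correct and follows essentially the same route as the paper: local finiteness of supports plus the trapping property give the isomorphism $\colim_{i\in I}\bV^{G}_{\bA}(Y_{i})\cong \bV^{G}_{\bA}(X)$, and then Proposition \ref{rgoigjoergergrgg} (preservation of filtered colimits by $\UK$) yields continuity. The extra bookkeeping you supply for fullness on morphisms is exactly the step the paper leaves implicit, and it is fine as stated.
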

\begin{proof}
Let $X$ be a $G$-bornological coarse space.
If $(M,\phi,\rho)$ is an equivariant  $X$-controlled $\bA$-object, then its support $\supp(M,\phi,\rho)$   is locally finite.   

 Let $(Y_{i})_{i\in I}$ be a trapping
 exhaustion of $X$. 
 Then there exists $i$ in $I$ such that
 $(M,\phi,\rho)$ is supported on $Y_{i}$. 
 Consequently we have an isomorphism 
   $$\colim_{i\in I}\bV^{G}_{\bA}(Y_{i})\cong \bV^{G}_{\bA}(X)\ .$$
By  Proposition \ref{rgoigjoergergrgg}
 we now get
 $\colim_{i\in I}\UK\cX^{G}_{\bA}(Y_{i})\simeq \UK\cX^{G}_{\bA}(X)$. 
 \end{proof}
 
 \subsection{Functors on the orbit category}\label{fiowejofwefewfewfwf}

In this section we package the equivariant homology theories $\UK\cX_{\bA}^{H}$ for all subgroups $H$ of $G$ together into one object. We furthermore give an alternative construction of the equivariant coarse homology  theories from the non-equivariant  coarse homology theory.

The orbit category  $\Orb(G)$ of $G$ is the category of transitive $G$-sets and equivariant maps. Every object of the orbit category is isomorphic to one of the from $G/H$ with the left $G$-action for a subgroup $H$ of $G$. In particular we have the object
$G$ with the left action. The right action of $G$ induces an isomorphism of groups 
$\Aut_{\Orb(G)}(G)\cong G^{op}$. Hence we have a functor
$i:BG\to \Orb(G)^{op}$ which sends the unique object of $BG$ to $G$. If $\cC$ is a presentable $\infty$-category, then we have an adjunction
$$\Res:\Fun(\Orb(G)^{op},\cC)\leftrightarrows \Fun(BG,\cC):\Coind\ ,$$
where
$\Res$ is the restriction along $i$. 

For every subgroup $H$ of $G$ we have an evaluation
$$\Fun(\Orb(G)^{op},\cC)\to \cC\ , \quad E\mapsto E^{(H)}$$
at the object $G/H$. The family of all these evaluations detects limits, colimits, and equivalences in $\Fun(\Orb(G)^{op},\cC)$.

We consider the relative category $(\Add,W)$, where $W$  are the equivalences of additive categories. Applying the localization functor \eqref{fiuzhriuferferferf} we then obtain an $\infty$-category $\Add_{\infty}:=\cL(\Add,{W})$. We furthermore have a morphism   
$\ell:\Add\to \Add_{\infty}$ where we secretly identify ordinary categories with $\infty$-categories using the  nerve functor.

{A marked additive category is a   pair $(\bA,M)$ of an additive category $\bA$ and a collection $M$ of isomorphisms in $\bA$ (called marked isomorphisms) which is closed under composition and taking inverses. A morphism between marked additive categories $(\bA,M)\to (\bA^{\prime},M^{\prime})$ is an exact functor $\bA\to \bA^{\prime}$ which in addition sends $M \to M^{\prime}$.}
{We get a category $\Add^{+}$ of small marked additive categories and morphisms.
A marked isomorphism $u:F\to F^{\prime}$ between two functors $F,F^{\prime}:(\bA,M)\to (\bA^{\prime},M^{\prime})$ between marked additive categories  
is an isomorphism  of functors such that for every object $A$ of $\bA$ the isomorphism
$u_{A}:F(A)\to F(A^{\prime})$ is a marked isomorphism.}

{A morphism between marked additive categories is called a marked equivalence if it is invertible up to marked isomorphism.
We consider the relative category $(\Add^{+},W^{+})$, where $W^{+}$ are the marked equivalences. We define
$\Add^{+}_{\infty}:=\cL(\Add^{+},W^{+})$ and let $\ell^{+}:\Add^{+}\to \Add^{+}_{\infty}$ denote the canonical localization functor.
}

{The functor $\cF:\Add^{+}\to \Add$ which forgets the marking induces a commuting square \begin{equation}\label{ewfwijoi43t5ge}
\xymatrix{\Add^{+}\ar[r]^{\cF}\ar[d]^{\ell^{+}}&\Add\ar[d]^{\ell}\\\Add^{+}_{\infty}\ar[r]^{\cF_{\infty}}&\Add_{\infty}}
\end{equation}
of $\infty$-categories.
}

{In \cite[Sec. 7]{descent} it is shown that $\Add^{+}_{\infty}$ is a presentable $\infty$-category.
Consequently we have an  adjunction $$\Res:\Fun(\Orb(G)^{op},\Add^{+}_{\infty})\leftrightarrows \Fun(BG,\Add^{+}_{\infty}):\Coind\ .$$}

\begin{ex}\label{rgiojerogregergrege}
We consider a marked additive category $\bA^{+} $ with an action of $G$, i.e., an object of $\Fun(BG,\Add^{+})$. Then we can form the object
$\Coind(\ell^{+}(\bA^{+}))$ in $\Fun(\Orb(G)^{op},\Add^{+}_{\infty})$. In order to understand this object we calculate the evaluation $\Coind(\ell^{+}(\bA^{+}))^{(H)}$ for a subgroup $H$ of $G$. As a first step we have
$$\Coind(\ell^{+}(\bA^{+}))^{(H)}\simeq \lim_{BH} \ell^{+}(\bA^{+})\ .$$
We now analyse the right-hand side.
We consider the following marked additive category $\hat \bA^{+,H}$:
\begin{enumerate}
\item The objects of $\hat \bA^{+,H}$ are pairs
$(M,\rho)$, where $M$ is an object of $\bA^{+}$ and $\rho=(\rho(h))_{h\in H}$ is a collection of marked isomorphisms  $\rho(h):M\to h(M)$ such that $h(\rho(h^{\prime}))\circ \rho(h)=\rho(h^{\prime}h)$ for all pairs of elements $h^{\prime},h$ in $H$.
\item A morphism $A:(M,\rho)\to (M^{\prime},\rho^{\prime})$ in $\hat \bA^{+,H}$ is a morphism $A:M\to M^{\prime}$ in
$\bA^{+}$ such that $h(A)\circ \rho(h)=\rho^{\prime}(h)\circ A$ for every $h$ in $H$.
The composition of morphisms is induced by the composition in $\bA^{+}$.
\item The marked isomorphisms in $\hat \bA^{+,H}$ are the isomorphisms given by   marked isomorphisms in $\bA^{+}$.
\end{enumerate}
In \cite[Sec. 7]{descent} it is shown that we have   a natural equivalence 
$$ \lim_{BH} \ell^{+}(\bA^{+})\simeq \ell^{+}(\hat \bA^{+,H})\ .$$
As a final result we get the equivalence
\begin{equation}\label{hgfuiehiufwefwefwf}
\Coind(\ell^{+}(\bA^{+}))^{(H)}\simeq\ell^{+}(\hat \bA^{+,H})\ .
\end{equation}

\mbox{}
\end{ex}

{
In follows from \eqref{ewfwijoi43t5ge}, the universal property of the localization $\ell$, and Lemma \ref{roirujoigegergergg} that we have a factorization \begin{equation}\label{fweoifjwer345345}
\xymatrix{\Add^{+}\ar[r]^{\cF}\ar[dr]^{\ell^{+}}&\Add\ar[rr]^{\UK}\ar[dr]^{\ell}&&\cM_{loc} \\ &\Add^{+}_{\infty}\ar[r]^{\cF_{\infty}}&\Add_{\infty}\ar[ur]^{\UK_{\infty}}&}\ .
\end{equation}
}

{We refine the functor $\bV_{\bA}:\BC\to \Add$ described in Section \ref{reoifowefwefewfewf}
to a functor
$\bV_{\bA}^{+}:\BC\to \Add^{+}$ by defining
$$\bV_{\bA}^{+}(X):=(\bV_{\bA}(X), M)\ ,$$ where the set of  marked  isomorphisms $M$ is the set of $\diag(X)$-controlled isomorphisms in $\bV_{\bA}(X)$.
}

 \begin{ddd}\label{eiufwfwefewfewf}
 We define the functor $$\underline{\UK\cX_{\bA}}:G\BC\to \Fun(\Orb(G)^{op},\cM_{\infty})$$ as the composition
 \begin{eqnarray*}
 G\BC\to \Fun(BG,\BC)\stackrel{{\bV^{+}_{\bA}}}{\to}\Fun(BG,{\Add^{+}})\stackrel{{\ell^{+}}}{\to}\Fun(BG,{\Add^{+}_{\infty}}) &&\\ \stackrel{\Coind}{\to} \Fun(\Orb(G)^{op},\Add^{+}_{\infty}){\stackrel{\cF_{\infty}}{\to}  \Fun(\Orb(G)^{op},\Add_{\infty})} \stackrel{\UK_{\infty}}{\to} \Fun(\Orb(G)^{op},\cM_{loc})&&
 \end{eqnarray*}
 \end{ddd}

\begin{theorem}\label{wfoijwfwfewf}
$\underline{\UK\cX_{\bA}}$ is an equivariant coarse homology theory.
\end{theorem}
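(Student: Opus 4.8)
The plan is to reduce the statement for $\underline{\UK\cX_\bA}$ to the non-equivariant coarse homology theory axioms verified in Theorem \ref{wiwofewfewfewfewfw}, using the fact that the family of evaluations $E\mapsto E^{(H)}$ at the orbits $G/H$, for $H$ ranging over the subgroups of $G$, detects limits, colimits, and equivalences in $\Fun(\Orb(G)^{op},\cM_{loc})$. Thus it suffices to show that for each subgroup $H\le G$ the composite functor
$$G\BC\xrightarrow{\underline{\UK\cX_\bA}}\Fun(\Orb(G)^{op},\cM_{loc})\xrightarrow{(-)^{(H)}}\cM_{loc}$$
is a $G$-equivariant coarse homology theory in the sense of Definition \ref{foifjiofwefewwf}; that is, it is coarsely invariant, excisive, vanishes on flasques, and is $u$-continuous as a functor on $G\BC$. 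Since each of the four axioms is about (co)fibre sequences, colimits, and equivalences in $\cM_{loc}$, and each is preserved and reflected by the collection of evaluation functors, it is equivalent to check the axioms for the whole functor $\underline{\UK\cX_\bA}$ objectwise.

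The key computational input is Example \ref{rgiojerogregergrege}, which identifies $(-)^{(H)}\circ\Coind\circ\ell^+$ applied to a $G$-marked additive category $\bA^+$ with $\ell^+(\hat\bA^{+,H})$, the localization of the marked additive category of $H$-equivariant objects in $\bA^+$. Applied to $\bA^+=\bV^+_\bA(X)$ for a $G$-bornological coarse space $X$, together with $\cF_\infty$ and $\UK_\infty$, this gives a natural equivalence
$$\underline{\UK\cX_\bA}(X)^{(H)}\simeq\UK(\cF(\widehat{\bV^+_\bA(X)}^{H}))\ .$$
So the first real step is to identify the additive category $\cF(\widehat{\bV^+_\bA(X)}^{H})$, which consists of $H$-equivariant-up-to-$\diag(X)$-controlled-isomorphism $X$-controlled $\bA$-objects, with the additive category $\bV^H_\bA(\Res^G_H X)$ of genuinely $H$-equivariant $\Res^G_H X$-controlled $\bA$-objects from Section \ref{reoifowefwefewfewf}. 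The point is that the marking only records $\diag(X)$-controlled isomorphisms, and a $\diag(X)$-controlled natural transformation between $X$-controlled objects is an honest natural isomorphism in $\bV_\bA(X)$ with controlled support; hence an object of $\hat\bA^{+,H}$ for $\bA^+=\bV^+_\bA(X)$ is exactly an $X$-controlled $\bA$-object equipped with an $H$-action through controlled isomorphisms, i.e. an $H$-equivariant $X$-controlled $\bA$-object (with $X$ regarded as an $H$-bornological coarse space by restriction). This step is verified essentially in \cite{bekw, descent}; I would invoke it and cite those sources, as the theorem statement announces the proof is modelled on \cite{bekw}.

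Once we have the natural equivalence $\underline{\UK\cX_\bA}(X)^{(H)}\simeq\UK\cX^{H}_\bA(\Res^G_H X)$ of functors $G\BC\to\cM_{loc}$, the four axioms for $\underline{\UK\cX_\bA}$ follow immediately. Indeed coarse invariance, excision, vanishing on flasques, and $u$-continuity are all statements about $G\BC$ that involve only equivalences, cofibre sequences, and filtered colimits in the target; under restriction $\Res^G_H$, the relevant structures (cylinders $\{0,1\}_{max,max}\otimes X$, big families, complementary pairs, flasqueness data, and invariant entourages with $\cC^G$ cofinal in $\cC^H$ after restriction) are sent to the corresponding structures for the $H$-bornological coarse space $\Res^G_H X$, and each axiom then holds by Theorem \ref{wiwofewfewfewfewfw} applied to the group $H$. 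Since this holds for every $H\le G$ and the evaluations detect the relevant notions, $\underline{\UK\cX_\bA}$ is an equivariant coarse homology theory. The main obstacle is the bookkeeping in the identification $\cF(\hat\bA^{+,H})\simeq\bV^H_\bA(\Res^G_H X)$ — in particular checking that the marking, which only sees $\diag(X)$-controlled isomorphisms, is exactly what is needed so that $\Coind$ produces the genuine equivariant controlled objects rather than some larger lax variant; everything after that identification is routine functoriality under restriction combined with the already-established non-equivariant case.
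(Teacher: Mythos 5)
Your proposal follows the paper's own argument: reduce via the evaluations $(-)^{(H)}$, use the coinduction computation of Example \ref{rgiojerogregergrege} to identify $\underline{\UK\cX_{\bA}}^{(H)}(X)$ with $\UK(\cF(\hat{\bV}^{+}_{\bA}(\Res^{G}_{H}(X))^{H}))$, observe (as the paper does via equation \eqref{gpogjkpoegerg}) that the marking by $\diag(X)$-controlled isomorphisms makes this category canonically isomorphic to $\bV^{H}_{\bA}(\Res^{G}_{H}(X))$, and then invoke Theorem \ref{wiwofewfewfewfewfw} for $H$ together with the fact that precomposition with $\Res^{G}_{H}$ preserves the coarse homology axioms. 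This is essentially the same proof, correctly executed.
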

\begin{proof}
Since the conditions listed in Definition \ref{foifjiofwefewwf} concern equivalences or colimits and the collection of evaluations $\underline{\UK\cX_{\bA}}^{(H)}:G\BC\to \cM_{loc}$ detect equivalences and colimits it suffices to show that
$$ \underline{\UK\cX_{\bA}}^{(H)}:G\BC\to \cM_{loc}$$
are equivariant coarse homology theories for all subgroups $H$ of $G$.

We now note that if $E:H\BC\to \cC$ is a $H$-equivariant $\cC$-valued coarse  homology theory, then
$E\circ \Res^{G}_{H}:G\BC\to \cC$ is a  $G$-equivariant $\cC$-valued coarse homology theory, where
$\Res^{G}_{H}$ denotes the functor which restricts the action from $G$ to $H$.

The theorem now follows from the following Lemma  and Theorem \ref{wiwofewfewfewfewfw} ({applied to the subgroups $H$ in place of $G$}).\end{proof}

\begin{lem}
We have an equivalence of functors from $G{\BC}$ to $\cM_{loc}$
$$ \underline{\UK\cX_{\bA}}^{(H)}\simeq  \UK\cX^{H}_{\bA}\circ \Res^{G}_{H}\ .$$
\end{lem}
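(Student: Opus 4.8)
The plan is to unwind both sides explicitly at a subgroup $H$ of $G$ and to identify the resulting additive categories. By Definition \ref{eiufwfwefewfewf}, the evaluation $\underline{\UK\cX_{\bA}}^{(H)}(X)$ is obtained by applying $\bV_{\bA}^{+}$ to $X$ viewed as an object of $\Fun(BG,\BC)$ (forgetting down to $BH$ is implicit in the evaluation at $G/H$), then $\ell^{+}$, then $\Coind$, evaluated at $G/H$, then $\cF_{\infty}$ and $\UK_{\infty}$. The key computational input is Example \ref{rgiojerogregergrege}, specifically the equivalence \eqref{hgfuiehiufwefwefwf}: for a marked additive category $\bA^{+}$ with $G$-action, $\Coind(\ell^{+}(\bA^{+}))^{(H)}\simeq \ell^{+}(\hat\bA^{+,H})$, where $\hat\bA^{+,H}$ is the category of objects equipped with compatible families of marked isomorphisms $\rho(h):M\to h(M)$. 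So the first step is to apply this with $\bA^{+}=\bV_{\bA}^{+}(X)$ (with its $G$-action coming from the $G$-action on $X$), obtaining
$$\underline{\UK\cX_{\bA}}^{(H)}(X)\simeq \UK_{\infty}\big(\cF_{\infty}(\ell^{+}(\widehat{\bV_{\bA}^{+}(X)}^{\,H}))\big)\simeq \UK\big(\cF(\widehat{\bV_{\bA}^{+}(X)}^{\,H})\big)\ ,$$
using the factorization \eqref{fweoifjwer345345} to replace $\UK_{\infty}\circ\cF_{\infty}\circ\ell^{+}$ by $\UK\circ\cF$.

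The heart of the argument is then a comparison of additive categories: I would exhibit an equivalence
$$\cF\big(\widehat{\bV_{\bA}^{+}(X)}^{\,H}\big)\ \simeq\ \bV_{\bA}^{H}\big(\Res^{G}_{H}X\big)$$
natural in $X$. The objects on the left are triples $((M,\phi),\rho)$ where $(M,\phi)$ is an $X$-controlled $\bA$-object (no $G$-action yet) and $\rho=(\rho(h))_{h\in H}$ is a cocycle of $\diag(X)$-controlled isomorphisms $\rho(h):(M,\phi)\to h_{*}(M,\phi)$; unwinding, $h_{*}(M,\phi)=(M,\phi\circ h^{-1})$, so $\rho(h)$ is an isomorphism $M\to M$ that is $\diag(X)$-controlled and intertwines $\phi$ with $\phi\circ h^{-1}$, i.e. satisfies $\phi(hY)=\rho(h)\phi(Y)\rho(h)^{-1}$. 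That is exactly condition \eqref{gpogjkpoegerg} in Definition \ref{wefoiewfw}, so the data $((M,\phi),\rho)$ is precisely an $H$-equivariant $\Res^G_H X$-controlled $\bA$-object $(M,\phi,\rho)$ in the sense of Definition \ref{wefoiewfw} — the only point to check is that a $\diag(X)$-controlled isomorphism is the same as an isomorphism whose support lies in $\diag(X)$, which forces it to be the identity on each $M(\{x\})$ up to the diagonal, matching the morphism condition. On morphisms, $\cF(\widehat{\bV_{\bA}^{+}(X)}^{\,H})$ has the controlled morphisms $A:M\to M'$ with $h(A)\circ\rho(h)=\rho'(h)\circ A$, which is the $G$-intertwining-plus-controlled condition defining morphisms in $\bV_{\bA}^{H}$. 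So the two additive categories agree on the nose (or at worst via a canonical equivalence). The final step is to note that $\UK\cX^{H}_{\bA}\circ\Res^{G}_{H}=\UK\circ\bV^{H}_{\bA}\circ\Res^{G}_{H}$ by Definition \ref{fiopfewfefwefwefff}, so applying $\UK$ to the established equivalence of additive categories, and invoking Lemma \ref{roirujoigegergergg} (that $\UK$ sends equivalences of additive categories to equivalences), gives the desired equivalence of functors.

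The main obstacle is the bookkeeping in the middle step: one must be careful that the $G$-action on the marked category $\bV_{\bA}^{+}(X)$ that feeds into $\Coind$ is exactly the functoriality action $h\mapsto \bV_{\bA}(h)$, that the "marked" isomorphisms (the $\diag(X)$-controlled ones) are stable under this action so that $\hat\bA^{+,H}$ is well-defined, and — most delicately — that the passage through the localizations $\ell^{+}$ and $\ell$ is harmless. The latter is handled by the factorization \eqref{fweoifjwer345345}, which lets us compute $\UK$ directly on the $1$-categorical model $\hat\bA^{+,H}$ rather than on its image in $\Add^{+}_{\infty}$; and the limit-over-$BH$ computation that identifies $\Coind(\ell^{+}(-))^{(H)}$ with $\ell^{+}$ of the honest fixed-point category is exactly the content of \cite[Sec.~7]{descent} recalled in Example \ref{rgiojerogregergrege}, which we are entitled to cite. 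Everything else is a routine, if slightly lengthy, identification of explicit additive categories and of the obvious naturality in $X$.
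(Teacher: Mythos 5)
Your proposal is correct and follows essentially the same route as the paper's proof: evaluate the coinduction at $G/H$ via the explicit fixed-point category of Example \ref{rgiojerogregergrege} (equivalence \eqref{hgfuiehiufwefwefwf}), pass through the factorization \eqref{fweoifjwer345345} to compute $\UK$ on the $1$-categorical model, and identify $\cF(\hat \bV^{+}_{\bA}(\Res^{G}_{H}X)^{H})$ with $\bV^{H}_{\bA}(\Res^{G}_{H}X)$ by observing that the marking (being $\diag(X)$-controlled) of the cocycle $\rho$ is exactly condition \eqref{gpogjkpoegerg}. The only cosmetic difference is that the paper notes this identification is an equality of additive categories, so invoking Lemma \ref{roirujoigegergergg} is not strictly needed.
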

\begin{proof}
 Let $X$ be a $G$-bornological coarse space. {By  \eqref{hgfuiehiufwefwefwf} we have an equivalence   $$\Coind(\ell^{+}(\bV^{+}_{\bA}(X)))^{(H)}=\ell^{+}(\hat \bV^{+}_{\bA}(\Res^{G}_{H}(X))^{H})\ .$$} Hence we have an equivalence
{\begin{equation}\label{cknkjcdcdcdscscc}
 \underline{\UK\cX_{\bA}}^{(H)}(X)\simeq \UK_{\infty}(\cF_{\infty}(\ell^{+}(\hat \bV^{+}_{\bA}( \Res^{G}_{H}(X))^{H}))))\stackrel{\eqref{fweoifjwer345345}}{\simeq}   \UK(\cF(\hat \bV^{+}_{\bA} (\Res^{G}_{H}(X))^{H}))\ .
\end{equation}}By a comparison of the explicit description of $\hat \bV_{\bA}({\Res^{G}_{H}(X)})^{H}$ given in Example \ref{rgiojerogregergrege} with the definition of $\bV_{\bA}^{H}({\Res^{G}_{H}(X)})$ given in Section \ref{reoifowefwefewfewf}, {and using that \eqref{gpogjkpoegerg} expresses exactly the condition that $\rho(g)$ is $\diag(X)$-controlled (i.e., marked),} one now gets a canonical isomorphism (actually an equality) {of additive categories}
$${\cF(\hat \bV^{+}_{\bA}( \Res^{G}_{H}(X))^{H})\cong \bV^{H}_{\bA}( \Res^{G}_{H}(X))}\ .$$ 
In view of Definition \ref{fiopfewfefwefwefff} we have an equivalence 
$$\UK({\cF(\hat \bV^{+}_{\bA} (\Res^{G}_{H}(X))^{H})})\simeq \UK\cX^{H}_{\bA}({\Res^{G}_{H}(X)})\ .$$ 
{Together with \eqref{cknkjcdcdcdscscc} it yields the equivalence
$$ \underline{\UK\cX_{\bA}}^{(H)}(X)\simeq  \UK\cX^{H}_{\bA}(\Res^{G}_{H}(X))\ .$$ as desired.}
\end{proof}

\bibliographystyle{alpha}
\bibliography{dec2}

\begin{thebibliography}{BEKWb}

\bibitem[BE16]{Bunke:2016uq}
U.~Bunke and A.~Engel.
\newblock Homotopy theory with bornological coarse spaces.
\newblock \url{https://arxiv.org/abs/1607.03657}, 07 2016.

\bibitem[BEKWa]{descent}
U.~Bunke, A.~Engel, D.~Kasprowski, and Ch. Winges.
\newblock The descent principle.
\newblock in preparation.

\bibitem[BEKWb]{bekw}
U.~Bunke, A.~Engel, D.~Kasprowski, and Ch. Winges.
\newblock Equivariant coarse homotopy thery.
\newblock in preparation.

\bibitem[BFJR04]{MR2030590}
A.~Bartels, T.~Farrell, L.~Jones, and H.~Reich.
\newblock On the isomorphism conjecture in algebraic {$K$}-theory.
\newblock {\em Topology}, 43(1):157--213, 2004.

\bibitem[BGT13]{MR3070515}
A.~J. Blumberg, D.~Gepner, and G.~Tabuada.
\newblock A universal characterization of higher algebraic {$K$}-theory.
\newblock {\em Geom. Topol.}, 17(2):733--838, 2013.

\bibitem[BLR08]{blr}
A.~Bartels, W.~L{\"u}ck, and H.~Reich.
\newblock {The $K$-theoretic Farrell--Jones conjecture for hyperbolic groups}.
\newblock {\em Invent. math.}, 172:29--70, 2008.
\newblock \href{http://arxiv.org/abs/math/0701434}{arXiv:math/0701434}.

\bibitem[BS01]{MR1813503}
P.~Balmer and M.~Schlichting.
\newblock Idempotent completion of triangulated categories.
\newblock {\em J. Algebra}, 236(2):819--834, 2001.

\bibitem[BT08]{Balmer:2008aa}
P.~Balmer and G.~Tabuada.
\newblock The fundamental isomorphism conjecture via non-commutative motives.
\newblock \url{https://arxiv.org/abs/0810.2099}, 10 2008.

\bibitem[Car95]{MR1351941}
G.~Carlsson.
\newblock On the algebraic {$K$}-theory of infinite product categories.
\newblock {\em $K$-Theory}, 9(4):305--322, 1995.

\bibitem[CP95]{calped}
G.~Carlsson and E.K. Pedersen.
\newblock Controlled algebra and the novikov conjecture for $k$ and $l$-theory.
\newblock {\em Topology}, 34(3):731--758, 1995.

\bibitem[CP97]{MR1469141}
M.~C{{\'a}}rdenas and E.~K. Pedersen.
\newblock On the {K}aroubi filtration of a category.
\newblock {\em $K$-Theory}, 12(2):165--191, 1997.

\bibitem[DL98]{davis_lueck}
J.~F. Davis and W.~L{\"u}ck.
\newblock {Spaces over a Category and Assembly Maps in Isomorphism Conjectures
  in $K$- and $L$-Theory}.
\newblock {\em $K$-Theory}, 15:201--252, 1998.

\bibitem[Kas15]{KasFDC}
D.~Kasprowski.
\newblock {On the $K$-theory of groups with finite decomposition complexity}.
\newblock {\em Proc. London Math. Soc.}, 110(3):565--592, 2015.

\bibitem[Lur09]{htt}
J.~Lurie.
\newblock {\em Higher topos theory}, volume 170 of {\em Annals of Mathematics
  Studies}.
\newblock Princeton University Press, Princeton, NJ, 2009.

\bibitem[Lur14]{halg}
J.~Lurie.
\newblock Higher algebra.
\newblock \url{http://www. math. harvard. edu/\~{} lurie}, 2014.

\bibitem[PW85]{MR802790}
E.~K. Pedersen and Ch.~A. Weibel.
\newblock A nonconnective delooping of algebraic {$K$}-theory.
\newblock In {\em Algebraic and geometric topology ({N}ew {B}runswick,
  {N}.{J}., 1983)}, volume 1126 of {\em Lecture Notes in Math.}, pages
  166--181. Springer, Berlin, 1985.

\bibitem[Sch04]{MR2079996}
M.~Schlichting.
\newblock Delooping the {$K$}-theory of exact categories.
\newblock {\em Topology}, 43(5):1089--1103, 2004.

\bibitem[Sch06]{MR2206639}
M.~Schlichting.
\newblock Negative {$K$}-theory of derived categories.
\newblock {\em Math. Z.}, 253(1):97--134, 2006.

\bibitem[TT90]{TT}
R.~W. Thomason and Th. Trobaugh.
\newblock Higher algebraic {$K$}-theory of schemes and of derived categories.
\newblock In {\em The {G}rothendieck {F}estschrift, {V}ol.\ {III}}, volume~88
  of {\em Progr. Math.}, pages 247--435. Birkh\"auser Boston, Boston, MA, 1990.

\bibitem[Wei99]{weiss}
M.~Weiss.
\newblock Hammock localization in {W}aldhausen categories.
\newblock {\em J. Pure Appl. Algebra}, 138(2):185--195, 1999.

\end{thebibliography}

\end{document}